\newcommand{\func}[1]{\operatorname{#1}}
\newcommand{\ot}{\otimes}
\newcommand{\op}{\oplus}
\newcommand{\we}{\wedge}
\newcommand{\om}{\omega}
\newcommand{\G}{\mathcal{G}}
\newcommand{\R}{\mathbb{R}}
\newcommand{\C}{\mathbb{C}}
\newcommand{\g}{\mathfrak{g}}
\newcommand{\gsl}{\mathfrak{sl}}
\newcommand{\gsu}{\mathfrak{su}}
\newcommand{\m}{\mathfrak m}
\newcommand{\gh}{\mathfrak h}
\newcommand{\Ad}{\operatorname{Ad}}
\newcommand{\SO}{SO_{1,1}^+(\R)}
\newcommand{\sgn}{\operatorname{sgn}}
\newcommand{\Hom}{\operatorname{Hom}}
\newcommand{\gr}{\operatorname{gr}}
\newtheorem{thm}{Theorem}
\newtheorem*{thm*}{Theorem}
\newtheorem{prop}{Proposition}
\newtheorem{lem}{Lemma}
\newtheorem{cor}{Corollary}
\theoremstyle{definition}
\newtheorem{defi}{Definition}
\theoremstyle{remark}
\def\Z{\mathbb{Z}}
\begin{document}
\title{3-dimensional left-invariant sub-Lorentzian contact structures}
\author{Marek Grochowski}
\author{Alexandr Medvedev}
\author{Ben Warhurst}

\begin{abstract}
We provide a classification of $ts$-invariant sub-Lorentzian structures on $3$  dimensional contact Lie groups. Our approach is based on invariants arising form the construction of a normal Cartan connection.
\end{abstract}

\maketitle

\section{Introduction}

\subsection{Sub-Lorentzian Geometry}

Let $M$ be a smooth manifold. \textit{A sub-Lorentzian structure} (or 
\textit{a metric}) on $M$ is, by definition, a pair $(H,g)$ where $H$ is a
smooth bracket generating distribution on $M$ and $g$ is a smooth Lorentzian
metric on $H$. A triple $(M,H,g)$ where $M$ is a manifold endowed with a sub-Lorentzian structure $(H,g)$ is called \textit{a sub-Lorentzian manifold}. Such a manifold is said to be contact if the distribution $H$ is contact. 

The theory of sub-Lorentzian manifolds is a subject presently in it's infancy but interest is growing, see \cite{Gr1, Gr2, Gr3} for outlines of the general theory and \cite{KM, Vas, Gr5} where the main emphasis was on investigating the causal structure of such manifolds (geodesics, the structure of reachable sets, normal forms). There are also two papers \cite{BM, GK1} where the authors started 
the investigation of the theory of invariants for sub-Lorentzian 
structures.

Sub-Lorentzian metrics which are simultaneously time and space oriented will be called $ts$-oriented (see section 2 for more details). The aim of this paper is to classify all 
left invariant $ts$-oriented 
sub-Lorentzian structures on $3$-dimensional Lie groups. Two basic 
invariants which plays fundamental role in the study were introduced in 
\cite{BM}. These are: a $(1,1)$-tensor $h=\begin{psmallmatrix}
a & b\\ -b & -a 
\end{psmallmatrix}$ 
on $H$ and a smooth function $\kappa $ on $M$ (see Section 
\ref{sec:Inv} 
for exact formulas). These invariants can be obtained from 
a variety of perspectives with varying degrees of complexity. For 
example in \cite{GK1} a Riemannian approach is used while an approach 
using null lines should also be possible. In both cases, the fact that 
the underlying manifold is contact plays a significant role and so the 
techniques do not obviously generalize. 

In this work we use the standard machinery of Cartan connections with 
the goal 
of giving a thorough explanation of the origins of the invariants via Cartan 
theory. The general approach proceeds as follows: to begin, we construct the 
so called first order geometric structure corresponding to the sub-Lorentzian
structure $(M,H,g)$ (see section \ref{sec21} for the definitions). Specifically, 
we construct the symbol algebra corresponding to a canonical filtration of the 
tangent bundle generated by $H$.  The symbol algebra then defines the weighted 
frame bundle.  The first order geometric structure is an $SO_{1,1}$ reduction of 
the bundle of weighted frames. A frame on $M$ is said to be adapted if it corresponds
in a natural manner with a weighted frame and the bundle over $M$ consisting of adapted frames
admits a canonical Cartan connection. The curvature function of this connection generates the algebra of invariants for the corresponding 
sub-Lorentzian structure. In particular the invariants $\kappa$ and $h$ arise in a canonical decomposition 
of the curvature function for 3-dimensional contact sub-Lorentzian 
structure.

One can go further and obtain canonical frames using the $\SO$-equivariance 
of the structure function. This leads to the full classification of 
3-dimensional left-invariant sub-Lorentzian contact structures. In the 
following theorem we use the notation of \v{S}nobl and Winternitz 
\cite{SnobWint} for 3-dimensional real Lie algebras. In particular, 
$L(3,1)=\gh_3$ is the Heisenberg Lie algebra; the Lie
algebras $L(3,2,\eta)$, $L(3,4,\eta)$ and $L(3,3)$ are solvable with the
special cases $L(3,2,-1)=\mathfrak{p}_{1,1}$ and $L(3,4,0)=\mathfrak{e}_2$ being  the Poincar\'e and Euclidean
Lie algebras respectively; $L(3,5)$ and $L(3,6)$ are $\gsl_2$ and 
$\gsu_2$ respectively.
 Our main results is:
\begin{thm}\label{mainThm}
 All 3-dimensional left-invariant sub-Lorentzian contact structures up 
 to local $ts$-isometries are given by Table \ref{mainTbl}, where each 
 connected simply connected Lie group is associated with it's Lie 
 algebra. 
 Each row in the  table corresponds to a different structure. In 
 particular
 \begin{enumerate}
 	\item If $\det h\le 0$ then the structure is completely 
 	determined by the invariants $h$, $\kappa$ and $\tau$ if $\tau$ is 
 	defined.
 	\item If $\det h > 0$ then every value of $\kappa$ and $h$ admits 3 
 	locally non-isomorphic structures.
 \end{enumerate}
\end{thm}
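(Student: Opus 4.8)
The plan is to reduce the classification to a purely algebraic problem on the Lie algebra $\g=\operatorname{Lie}(G)$ and then to solve the resulting reconstruction equations case by case. First I would observe that a left-invariant $ts$-oriented sub-Lorentzian contact structure on $G$ is nothing but a $ts$-oriented Lorentzian $2$-plane $(\gh,g)\subset\g$ with $\gh+[\gh,\gh]=\g$. Because the whole construction of Section \ref{sec21} (symbol algebra, weighted frame bundle, $SO_{1,1}$-reduction, normal Cartan connection) is functorial, the Cartan connection attached to a left-invariant structure is itself left-invariant; hence its curvature function, and with it the invariants $h$, $\kappa$ (and $\tau$ where defined), are constant on $G$. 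Pushing the $\SO$-equivariance of the structure function as far as possible produces a canonical coframe on $G$ (unique up to a finite residual ambiguity), so two left-invariant structures are locally $ts$-isometric precisely when the structure constants of this canonical coframe agree, i.e.\ when the triples $(\g,\gh,g)$ agree up to an isomorphism of $\g$ preserving $\gh$, $g$ and both orientations. It therefore suffices to classify these triples.

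Next I would fix an adapted coframe: a $ts$-oriented pseudo-orthonormal basis $e_1$ (spacelike), $e_2$ (timelike) of $\gh$, and $e_0:=[e_1,e_2]$, which is transverse to $\gh$ by contactness and is unchanged by the residual $\SO$ (boost) gauge since boosts are unimodular on $\gh$. Writing out the remaining six structure constants and imposing the single Jacobi identity $J(e_0,e_1,e_2)=0$ gives one linear and two quadratic relations, cutting the parameter space down. Substituting this coframe into the formulas of Section \ref{sec:Inv} expresses $h$, $\kappa$ and $\tau$ through the structure constants; in particular $h$ becomes a fixed element of the pencil $\begin{psmallmatrix}a&b\\-b&-a\end{psmallmatrix}$, $\det h=b^2-a^2$ is $\SO$-invariant, and the boost orbit of $h$ is one branch of the conic $a^2-b^2=\mathrm{const}$.

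The classification then splits on the sign of $\det h$. When $\det h\le 0$ the boost brings $h$ to the rigid normal form $\operatorname{diag}(\lambda,-\lambda)$, or to a fixed nilpotent (resp.\ zero) form, leaving only finitely many residual frame choices; the Jacobi-reduced equations then determine all remaining structure constants from $h$, $\kappa$ and—in the subcases where a further canonical direction survives—the secondary invariant $\tau$. Matching each solution against the \v Snobl--Winternitz list yields assertion (1). When $\det h>0$ one normalizes $h$ to $\begin{psmallmatrix}0&\mu\\-\mu&0\end{psmallmatrix}$; after exhausting the boost gauge, recovering the remaining structure constants from $(h,\kappa)$ amounts to a polynomial condition which over $\R$ has exactly three real solutions, giving pairwise non-isomorphic triples $(\g,\gh,g)$, which is assertion (2). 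In both cases one finally tabulates the outcome, checks that every row of Table \ref{mainTbl} is realized and no further structure occurs, and verifies via the completeness of the Cartan connection that distinct rows are not locally $ts$-isometric while structures in the same row are (reconstruct and integrate the canonical coframe).

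The main obstacle is the $\det h>0$ case in Step four: proving that the multiplicity is \emph{exactly} three—not two, through accidental coincidences, and not more, through branches overlooked or identified by the residual discrete gauge—and then correctly matching the three resulting structures with their Lie algebras. This is where essentially all the content of part (2) lies. A secondary difficulty is the careful bookkeeping of the discrete choices throughout (signs of $\lambda$ and $\mu$, orientation-compatible reflections), which must be done consistently so that the implication ``equal invariants $\Rightarrow$ locally $ts$-isometric'' is neither over- nor under-counted.
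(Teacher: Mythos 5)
Your outline follows the same route as the paper (constancy of the invariants for left-invariant structures, normalization of $h$ under the $\SO$-action, the Jacobi identity on the remaining structure constants, and matching with the \v{S}nobl--Winternitz procedure), but it contains a genuine gap at the very first reduction. You assert that two left-invariant structures are locally $ts$-isometric precisely when the triples $(\g,\mathcal{H},g)$ agree up to a Lie algebra isomorphism preserving the plane, the metric and the orientations, so that the problem becomes a classification of such triples. The ``only if'' direction of this is false, and its failure is one of the main points of the theorem: for $h=0$ and $\kappa\neq 0$ the structure carried by $L(3,-1)=A^+(\R)\oplus\R$ and the one carried by $L(3,5)=\gsl_2(\R)$ are locally $ts$-isometric although the algebras are not isomorphic (Theorem \ref{isothm1}; this is why the table lists only $\gsl_2$ for $h=0$, $\kappa\in\R^*$). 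Your canonical-coframe argument cannot detect this because it breaks down exactly there: when $h=0$ the residual gauge group is the whole one-parameter group $\SO$, not a finite set, so no canonical coframe on $M$ exists and ``equal structure constants of the canonical coframe'' is not an available criterion. The paper handles this case separately by observing that the curvature function is then constant on the entire principal bundle (with $\SO$ acting trivially on it) and invoking Olver's theorem on coframes with constant structure functions upstairs on $\G$, then projecting the resulting diffeomorphism to $M$. Without this ingredient your procedure would list two inequivalent structures where there is one, and the complementary claim that distinct rows are inequivalent would rest on the same false equivalence.

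A secondary inaccuracy concerns the case $\det h>0$: the three structures attached to a given $(\kappa,h)$ do not arise as three real roots of one polynomial reconstruction equation, but as the three solution branches of the Jacobi constraints $c^1_{12}c^2_{13}=0$ and $c^2_{12}(c^2_{13}-2\chi)=0$ (one branch with $c^1_{12}=c^2_{12}=0$, and two branches in which a remaining constant is determined up to sign by $\kappa$ and $\chi$); their pairwise distinctness and the identification of the corresponding algebras is done by an explicit case analysis (Killing form, eigenvalues of the adjoint maps), and the paper notes that branches (2) and (3) are related by reversing the sign of the metric, which is not a $ts$-isometry. Likewise, in the case $\det h=0$, $h\neq 0$, $\kappa=0$ you allow for a secondary invariant but do not identify it: $\tau=c^1_{12}$ must be introduced because $(h,\kappa)$ genuinely fails to separate a one-parameter family, and it is interpreted as a covariant derivative of $h$ along the canonical frame. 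These points are repairable within your scheme, but the $h=0$ collapse of non-isomorphic groups is a missing idea, not a bookkeeping issue.
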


\begin{table}[h]
	\caption{3D left-invariant contact sub-Lorentzian structures}
	\label{mainTbl}
	\begin{tabu}{ *4{>{$} c<{$}} }
		\toprule
		h \mbox{ equivalent to}
		& \kappa 
		& \tau, \mbox{ if defined}
		& \mbox{Lie Algebra} 
		\\ 
		\midrule
		& \kappa =0 
		& -
		& L(3,1) = \gh_3
		\\
		\multirow{-2}{*}{
			$\begin{pmatrix} 0 & 0 \\ 0 & 0 \end{pmatrix}$ } 
		& \kappa\in \R^*
		& -
		& L(3,5)=\gsl_2
		\\
		\midrule
		& \kappa =0 
		& \tau=2
		& L(3,3)
		\\
		& \kappa=0 
		& |\tau|>2
		& 
		L\left(3,2,\frac{-\tau-\sqrt{\tau^2-4}}{-\tau+\sqrt{\tau^2-4}}\right)
		\\
		& \kappa=0 
		& |\tau|<2
		& L\left(3,4,\frac{|\tau|}{\sqrt{\tau^2-4}}\right)
		\\
		\multirow{-5}{*}{
			$\begin{pmatrix} 1 & -1 \\ 1 & -1 \end{pmatrix}$, 
			$\begin{pmatrix} -1 & -1 \\ 1 & 1 \end{pmatrix}$  } 
		& \kappa\in \R^*
		& -
		& L(3,5)=\gsl_2
		\\
		\midrule
		& \kappa=0 
		& \tau\in \R
		& 
		L\left(3,2,\frac{\tau-\sqrt{\tau^2+4}}{\tau+\sqrt{\tau^2+4}}\right)
		\\
		\multirow{-2}{*}{
			$\begin{pmatrix} 1 & 1 \\ -1 & -1 \end{pmatrix}$, 
			$\begin{pmatrix} -1 & 1 \\ -1 & 1 \end{pmatrix}$  } 
		& \kappa\in \R^* 
		& -
		& L(3,5)=\gsl_2
		\\
		\midrule
		& |\kappa|<-\chi
		& -
		& L(3,6)=\gsu_2
		\\
		\multirow{-2}{*}{
			$\begin{pmatrix} 0 & \chi \\ -\chi & 0 \end{pmatrix}$, $\chi\neq 
			0$ } 
		& |\kappa|>-\chi,|\kappa|\neq\chi
		& -
		& L(3,5)=\gsl_2
		\\
		& \chi=\pm\kappa>0,
		& -
		& L(3,2,-1)=\mathfrak{p}_{1,1}
		\\
		& \chi=\pm\kappa<0,
		& -
		& L(3,4,0)=\mathfrak{e}_2
		\\
		\midrule
		& \kappa=-7\chi
		& -
		& L(3,3)
		\\
		\multirow{-2}{*}{
			$\begin{pmatrix} 0 & \chi \\ -\chi & 0 \end{pmatrix}$, $\chi\neq 
			0$ } 
		& \kappa>-7\chi
		& -
		& L\left(3,2,\frac{\sqrt{|\chi-\kappa|}-\sqrt{7\chi+\kappa}}
		{\sqrt{|\chi-\kappa|}+\sqrt{7\chi+\kappa}}\right)
		\\
		& \kappa<-7\chi,
		& -
		& L\left(3,4,\frac{\sqrt{|\chi-\kappa|}}
		{\sqrt{-7\chi-\kappa}}\right)
		\\
		\midrule
		& \kappa=7\chi
		& -
		& L(3,3)
		\\
		\multirow{-2}{*}{
			$\begin{pmatrix} 0 & \chi \\ -\chi & 0 \end{pmatrix}$, $\chi\neq 
			0$ } 
		& \kappa<7\chi
		& -
		& L\left(3,2,\frac{\sqrt{|\chi+\kappa|}-\sqrt{7\chi-\kappa}}
		{\sqrt{|\chi+\kappa|}+\sqrt{7\chi-\kappa}}\right)
		\\
		& \kappa>7\chi,
		& -
		& L\left(3,4,\frac{\sqrt{|\chi+\kappa|}}
		{\sqrt{-7\chi+\kappa}}\right)
		\\
		\midrule
		\begin{pmatrix} \chi & 0 \\ 0 & -\chi \end{pmatrix}$, $\chi\neq 0
		& \kappa \in\R
		& -
		& L(3,5)=\gsl_2
		\\
		\bottomrule
	\end{tabu}
\end{table}

We outline the main similarities and differences with the sub-Riemannian 
case. First of all, as a byproduct of our classification procedure, we see specific similarities with phenomena which arise in \cite{Agr1}, namely that the affine group $A_1(\R)\op\R$ is locally $ts$-isometric to $SL_2(R)$. Secondly, the 
scalar invariants are similar to the sub-Riemannian case with the differences simply in sign. However, the tensor $h$ represents a significant difference. In particular we have 4 cases to consider: 
$h=0$; $\det h=0,h\neq 0$; $\det h>0$ and $\det h<0.$ Finally, 
the case $\det h=0,h\neq 0$, $\kappa=0$ has a non-discrete family of 
non-equivalent structures. We introduced the invariant $\tau$, which is in 
fact the covariant derivative of $h$, to parametrise this family.

The paper is organized as follows. In section 2 we make precise some terminology concerning orientation and isometries as well as provide a motivating example comming from control theory. In section 3 we outline Cartan geometry and the construction of canonical Cartan connections. In section 4 we apply the Cartan methods to our specific case to produce the invariants of the sub-Lorentzian srtuctures of interest. In section 5 we combine the invariants with the classsification procedure of \v{S}nobl and Winternitz \cite{SnobWint} to prove our main theorem \ref{mainThm}. 

\section{Sub-Lorentzian preliminaries}

\subsection{Orientation} Suppose that $(M,H,g)$ is a sub-Lorentzian manifold and let $q\in M$. A vector $v\in H_{q}$ is said to be timelike if $g(v,v)<0$, null if $g(v,v)=0$ and $v\neq 0$, nonspacelike if $g(v,v)\leq 0$, $v\neq 0$, and finally spacelike if $g(v,v)>0$ or $v=0$. A vector field on $M$ is called timelike (nonspacelike, null) if its values $X(q)$ have such a property for every $q\in M$. The definition of course implies that any timelike, nonspacelike or null vector field is horizontal in the sense that it is a section of the bundle $H\to M$.

If $(H,g)$ is a sub-Lorentzian metric on $M$, then it can be proved that $H$ admits a splitting $H=H^{-}\oplus H^{+}$ of sub-bunbdles such that $H^{-}$ is of rank $1$ and $g$ is negative (positive) definite on $H^{-}$ ($H^{+}$). Any splitting with the above-mentioned properties is called a causal decomposition for $(H,g)$. We say that the metric $(H,g)$ is time-orientable (resp. space-orientable) if the bundle $H^{-}\to M$ (resp. $H^{+}\to M$) is orientable. Consequently, by a time (space) orientation we mean a given orientation of the bundle $H^{-}\to M$ ($H^{+}\to M$). Note that since a rank $1$ bundle is orientable if and only if it is trivial, time orientability of $(M,H,g)$ is equivalent to the existence of a timelike vector field on $M$. Thus it is sometimes more convenient to define a time orientation of $(M,H,g)$ as a choice of a timelike vector field $X$ on $M$.

Since causal decompositions are not unique we must make it precise when two
pairs of bundles $H_{1}^{+}\to M$, $H_{2}^{+}\to M$
and $H_{1}^{-}\to M$, $H_{2}^{-}\to M$ have
compatible orientations, where $H=H_{1}^{-}\oplus H_{1}^{+}$and $%
H=H_{2}^{-}\oplus H_{2}^{+}$ are causal decompositions. So we say that the
given orientations of $H_{1}^{-}\to M$, $H_{2}^{-}%
\to M$ are compatible if $g(X_{1},X_{2})<0$, where $X_{i}$ is
the section of $H_{i}^{-}\to M$ which agrees with the
orientation of $H_{i}^{-}\to M$, $i=1,2$. On the other hand, $%
H_{1}^{+}\to M$, $H_{2}^{+}\to M$ have compatible
orientations if the following condition is satisfied: for any point $q\in M$
there exists its neighbourhood $U$ and linearly independent sections $%
X_{i,1},...,X_{i,k}:U\to H_{i}^{+}$, $rank$ $H_{i}^{+}=k$, such
that $X_{i,1},...,X_{i,k}$ agrees with the orientation of $%
H_{i}^{+}\to M$, $i=1,2$, and $\det \left(
g(X_{1,i},X_{1,j})\right) _{i,j=1,...,k}>0$.

As mentioned earlier, Sub-Lorentzian metrics which are simultaneously time and space oriented will be called $ts$-oriented. If $(M,H,g)$ is time-oriented by a vector field $X$ then a nonspacelike $v\in H_{q}$ is said to be future directed if $g(v,X(q))<0$.

\subsection{Isometries} Let $(M,H,g)$ be a sub-Lorentzian manifold. A diffeomorphism $%
f:M\to M$ is called an isometry if (i) $d_{q}f(H_{q})=H_{f(q)}$
for every $q\in M,$ and (ii) $d_{q}f:H_{q}\to H_{f(q)}$ is a
linear isometry for every $q$, that is to say if $%
g(d_{q}f(v),d_{q}f(w))=g(v,w)$ for all $v,w \in H_{q}$ and $q\in M$. Of
course, all isometries of a given sub-Lorentzian manifold form a group. Let $%
f:M\to M$ be an isometry and let $H=H^{-}\oplus H^{+}$ be a
causal decomposition. Then $H=H_{1}^{-}\oplus H_{1}^{+}$ is also a causal
decomposition, where $H_{1}^{\pm }=df(H^{\pm })$. Suppose now that $(M,H,g)$
is time-oriented (resp. space-oriented). We say that $f$ is a $t$-isometry
(resp. $s$-isometry) if the fiber bundle map $df|_{H^{-}}:H^{-} \to H_{1}^{-}$ (resp. $df|_{H^{+}}:H^{+} \to H_{1}^{+} $) is orientation preserving. In case $(M,H,g)$ is $ts$-oriented, $f$ is called a $ts$-isometry if it preserves both orientations. Clearly, any 
$ts$-isometry $f$ is characterized by the condition $d_{q}f\in SO_{1,1}(k)$
for every $q\in M$.

\subsection{An application}
Sub-Lorentzian manifolds arise in control theory. Suppose that $(M,H,g)$ is a time-oriented sub-Lorentzian manifold, $\mathrm{rank}\,H=k+1$. Let $X_0,X_1,\dots,X_k$ be an orthonormal basis for $(H,g)$, defined
on an open set $U\subset M$, where $X_0$ is a time orientation. An absolutely continious curve $\gamma\colon (a,b)\to M$ is said to be 
\textit{nonspacelike future directed}, if $\dot{\gamma}(t)\in H_{\gamma(t)}$, $g(\dot{\gamma}(t),X_0(\gamma(t))) \leq 0$ and $g(\dot{\gamma}(t),\dot{\gamma}(t))\leq 0$ for almost every $t$ in $(a,b)$.
It can be proved (see \cite{Gr5}) that up to a reparameterization all nonspacelike future directed curves in $U$ can be obtained as trajectories of the following 
affine control system on $U$:
\begin{align}\label{AffCon}
 \dot{q}=X_0+\sum_{i=1}^{k}u_iX_i \text{,\;\;}
\end{align}
where the set of control parameters is equal to
\begin{align*}
 \mathcal{C}=\left\{u\in\R^{k}\text{;\;} \sum_{i=1}^{k} u_i^2\leq 1\right\} \text{,}
\end{align*}
i.e. to the  unit ball centered at $0$. Note that (\ref{AffCon}) is not uniquely determined by the structure $(H|_U,g)$, it depends on the choice of an
orthonormal frame $X_0,X_1,\dots,X_k$, where $X_0$ is a time orientation. However, any two such systems are equivalent in the sense that they have the same set of trajectories.

Affine control systems frequently arise in various fields of mathematics and physics (cf. ,\cite{Jurd}, \cite{Mont}), so it is worth noting that our results can be used to classify such systems in the case where $M$ is a $3$ dimensional Lie group, $g$ is left-invariant and $k=1$. For more general systems, Cartan geometry applies equally well, and so the outline given here can also be used in such cases.  

\section{Cartan Geometries Associated with Structures on Filtered 
Manifolds}

\subsection{Geometric structures on filtered manifolds} \label{sec21}
A filtered manifold is basically a manifold endowed with a filtration of the tangent bundle. A manifold with a bracket generating distribution endowed with a (pseudo) sub-Riemannian structure with constant symbol sits within the purview of the general theory of geometric structures on filtered manifolds.
 In this section we briefly describe some basic constructions from 
 nilpotent differential geometry \cite{Morimoto93}. Our aim is to 
 illustrate a general strategy for the construction of canonical Cartan 
 connections related to geometries on filtered manifolds.

\begin{defi}
	A filtration of the tangent bundle of a manifold $M$ is a sequence 
	$\{F^i\}_{ i \in \Z}$ of sub-bundles of $TM$ such that 
	\begin{enumerate}[i]
		\item $F^0 =\{0\}$,
		\item $F^{i+1} \subset F^i$,
		\item $\cup_{i \in \Z} F^i=TM$,
		\item $[\underline{F}^i, \underline{F}^j] \subset 
		\underline{F}^{i+j}, \quad \forall i,j \in \Z,  $
	\end{enumerate}
	where $\underline{F}^i$ denotes the sheaf of germs of local sections 
	of $F^i$. A \emph{filtered manifold} is a smooth manifold $M$ 
	equipped with a filtration of the tangent bundle. 
\end{defi}

A filtration naturally arises from a smooth bracket generating 
distribution $D$. Set $\underline{F}^i=\{0\}$ for all $i \geq 0$ and 
let $ \underline{F}^{-1}$ denote the sheaf of germs of local sections 
of $D$. For $i<0$, we inductively define a sequence of sheaves by 
setting $\underline{F}^{i-1} = \underline{F}^{i} +[\underline{F}^i 
,\underline{F}^{-1} ].$ The filtration is the sequence $\{F^i\}$ 
where each $F^i$ is the union  of stalks $F^i=\cup_{p \in M} 
\underline{F}^i|_p $.

If $ \func{gr}_{-i} T_pM=F^{-i}_p /F^{-i+1}_p$, then the graded tangent 
space is the vector space 
\[\func{gr} T_p M = \bigoplus_{i=1}^k \func{gr}_{-i} T_pM.\]
If $X \in \Gamma(F^{-i})$ and $Y \in \Gamma(F^{-j})$ are local sections 
defined on a neighborhood of $p$, then we 
have 
\[ [  X+ \Gamma(F^{-i+1}) ,  Y+ \Gamma(F^{-j+1})] = [ X, Y] + 
\Gamma(F^{-(i+j)+1}) \]
on some neighborhood of $p$, and it follows that the Lie bracket of 
vector fields  induces a well defined Lie bracket on $\func{gr} T_p M$ 
thus defining a stratified nilpotent Lie 
algebra of step $k$. Lie algebra  $\func{gr} T_p M$ is often called  \emph{a symbol Lie algebra} of the filtration $F$ in the point $p.$ We say that a filtered manifold is \emph{of type $\mathfrak{m}$} if $\func{gr} T_p M$ equipped with the induced Lie bracket is 
isomorphic with $\mathfrak{m}$ as a  stratified nilpotent Lie algebra at every point $p$.

\begin{defi}
Let $U$ be an open subset of a filtered manifold $M$ of type $\mathfrak{m}$. \emph{A local weighted frame} is a map $\Phi:U \times \mathfrak{m} \to \gr TM$ such that for each point $p \in U$ the map $\Phi_p: \mathfrak{m} \to \func{gr} T_pM$ is a strata preserving Lie algebra isomorphism. The union of germs of local weighted frames over all points of $M$ defines an $\func{Aut}(\mathfrak{m})$-principle bundle which we refer to as the \emph{weighted frame bundle} of the filtered manifold $M$ of type $\mathfrak{m}$.
\end{defi}

 \begin{defi} If $M$ is filtered of type $\mathfrak{m}$ then a local 
 frame $\{E_j^{-i}\}$ of $TM$ is said to be \emph{adapted} if there 
 exists a weighted frame  $\Phi$  such that $\Phi(p,e_j^{-i}) = 
 E_j^{-i}(p) +F^{-i+1}_p$ where $e_j^{-i}$ form a basis of 
 $\mathfrak{m}_{-i}$. If $\{\omega^j_{-i}\} \subset T^*M$ is the set of 
 dual elements to $\{E_j^{-i}\}$, then the map $ e_j^{-i} \otimes 
 \omega^j_{-i} \colon TM\to \mathfrak{m}$  is called \emph{an adapted 
 coframe}.	
\end{defi}

\begin{defi}
	A \emph{first order geometric structure} on a filtered manifold of 
	type 
	$\mathfrak{m}$
	is a reduction of the weighted frame bundle to the subgroup $G_0 
	\subset \func{Aut}(\mathfrak{m})$. We call a weighted frame which 
	belongs to the reduced bundle \emph{a weighted frame adapted to the 
		structure.}
\end{defi}

Consider a (pseudo) sub-Riemannian structure which is defined by a 
pseudo-Riemannian metric $g$ on the distribution $\mathcal{H}\subset 
TM$. If the distribution is bracket generating then $M$ is a filtered 
manifold and the metric $g$ defines a metric (pseudo) sub-Riemannian 
symbol $(g,\m)$ in every point. If metric symbols are equivalent for 
all points we say that the corresponding structure has constant metric 
symbol. For such structures the weighted frame bundle can be reduced to 
the structure group $G_0\subseteq SO(\m_{-1})$ which gives us a 
canonically defined first order geometric structure on $M$.

\subsection{Cartan geometries as a generalization of homogeneous spaces 
}

A homogeneous space for a Lie group $G$ is a manifold $M$ on which the 
Lie group $G$ acts from the right both transitively and effectively. 
If $H$ is the stabilizer of an arbitrary point $p\in M$ 
then $H$ is a closed Lie subgroup of $G$. The manifold $M$ is 
diffeomorphic to the right coset space $G/H$ under the action $(Ha)g = 
H(ag)$.

Consider the 3 natural actions which are defined for all Lie groups $G$:
\begin{itemize}
\item left multiplication $L_g(a)=ga$,

\item right multiplication $R_g(a)=ag$,

\item conjugation $C_g(a)=gag^{-1}$,
\end{itemize}
All maps above are diffeomorphisms and their differentials are denoted by $%
L_{g*}$, $R_{g*}$ and $Ad_g$ respectively. In particular, the tangent map 
\begin{equation*}
L_{g^{-1}*}\colon T_g G \to T_e G 
\end{equation*}
is an isomorphisms of tangent spaces. The \emph{Maurer-Cartan form} 
$\tilde\omega
\colon TG \to \mathfrak{g}$ is defined pointwise by 
$\tilde\omega_g=L_{g^{-1}*}$
and satisfies the following 3 properties which will be crucial in the
definition of Cartan connection:

\begin{enumerate}
\item ${\tilde\omega}_g \colon T_g G \to \mathfrak{g}$ is an 
isomorphism;

\item $R_g^*\tilde\omega = Ad_{g^{-1}} \tilde\omega ;$

\item for all left-invariant vector field $X$ we have 
$\tilde\omega_g(X_g)=X_e$.
\end{enumerate}

We introduce the commutator on 
forms with values in a Lie algebra $\mathfrak{g}$ defined by
\begin{equation*}
[\alpha,\beta] (X,Y)=[\alpha(X),\beta(Y)] + [\beta(X),\alpha(Y) ].
\end{equation*}
In particular
\begin{equation*}
[\alpha(X),\alpha(Y)] = \frac12 [\alpha,\alpha] (X,Y).
\end{equation*}
If $e_i$ is basis of $\mathfrak{g}$, $\alpha=\sum_i e_i \otimes 
\alpha_i$ and $\beta=\sum_i e_i \otimes \beta_i$, then $[\alpha,\beta]$ 
is defined by 
\begin{equation*}
[\alpha,\beta]=\sum_{i,j} [e_i,e_j] \otimes \alpha_i\wedge\beta_j .
\end{equation*}

One of the key properties of the Maurer-Cartan form $\tilde\omega$ is 
that the following
structure equation holds: 
\begin{equation*}
d\tilde\omega +\frac12[\tilde\omega ,\tilde\omega ]=0. 
\end{equation*}
To show that structure equation holds, it is sufficient to check it for
left-invariant vector fields. Using Cartan's formula we obtain 
\begin{align*}
d\tilde\omega(X,Y)&=X\tilde\omega(Y)-Y\tilde\omega(X)-\tilde\omega([X,Y])
 \\
& = -\tilde\omega([X,Y]) =-[\tilde\omega(X),\tilde\omega(Y) ] = 
-\frac12 [\tilde\omega,\tilde\omega](X,Y),
\end{align*}
where $X$ and $Y$ are left-invariant vector fields.

Cartan geometries generalize homogeneous spaces $G \to G/H$ simply 
by considering a general principal $H$-bundle $\mathcal{G} \to \mathcal{G}/H$ and 
prescribing an object on $\mathcal{G}$ analogous to the Maurer Cartan form on $G$, 
where properties analogous to (2) and (3) above are only required to 
hold with respect to $H$.

Let $\mathfrak{g}$ and $\mathfrak{h}$ denote the Lie algebras of the Lie groups $G$ and $H$ respectively.

\begin{defi}
A \emph{Cartan geometry} of infinitesimal type $(\mathfrak{g}, 
\mathfrak{h})$ on a manifold $M$ is a principal $H$-bundle 
$\mathcal{G}$ over $M$ together with a form $\tilde 
\omega:T\mathcal{G}\to \mathfrak{g}$, called the \emph{Cartan 
connection form}, having the following properties:
\begin{enumerate}
\item ${\tilde \omega}_p\colon T_p\mathcal{G} \to \mathfrak{g}$ is an isomorphism;
\item $R_h^* \tilde \omega = \func{Ad}_{h^{-1}} \tilde \omega $ for all $h\in H$;
\item $\tilde \omega(X^*)=X$ where $X^*$ is a fundamental vector field 
corresponding to $X \in \mathfrak{h}$, i.e., $X^*f(p)= \frac{d}{dt} f(p 
\exp(tX))|_{t=0}$.
\end{enumerate}
\end{defi}

The Maurer-Cartan structure equation doesn't hold for general Cartan
connections. The $\mathfrak{g}$-valued 2-form $\tilde \Omega \in \mathfrak{g}
\, \otimes \bigwedge^2 T\mathcal{G}^*$ given by the formula 
\begin{equation*}
\tilde \Omega=d \tilde \omega+ \frac12 [\tilde \omega,\tilde \omega], 
\end{equation*}
is called the \emph{curvature form}. A fundamental property of the curvature
is that $v_p \, \lrcorner \, \tilde \Omega_p=0$ for all $v_p$ belonging to the
vertical sub-bundle $\mathcal{V}=\mathrm{ker}\, \pi_*$, where $\pi : \mathcal{%
G} \to M$ is the natural projection, see \cite[p. 187]{Sharpe}.

On the manifold $M$ the relevant object to study is the pull back of the Cartan connection by a section of the principle bundle $\mathcal{G}$.

\begin{defi}
Given an arbitrary section $s\colon M\to\mathcal{G}$, the \emph{Cartan 
gauge} corresponding to $s$ is the one form $\omega=s^*\tilde{\omega}$.
\end{defi}

Consider a change of section $\bar s= s h,$ where $h\colon M\to H$. Then the Cartan gauge changes in the following way: 
\begin{equation}  \label{intro:cartan_change}
\bar\omega =\bar s^*\tilde{\omega} =\func{Ad}_{h^{-1}}\omega +
h^*\omega_H=Ad_{h^{-1}}\omega+ h^{-1} dh,
\end{equation}
where $\omega_H$ is Maurer-Cartan form on $H$.

The pull-back of the curvature $\tilde{\Omega}$ on the principle $H$-bundle $\mathcal{G}$ is a two form $\Omega=s^*\tilde{\Omega}$ on the manifold $M$ and is given by the following formula 
\begin{equation*}
\Omega=d\omega+\frac12 [\omega,\omega].
\end{equation*}
If we change section $\bar s= s h$ the curvature on the manifold changes by the adjoint action of $h^{-1}$: 
\begin{equation}
\bar\Omega =\bar s^*\tilde{\Omega} =\func{Ad}_{h^{-1}}\Omega.
\end{equation}

\subsection{Cartan connections associated with structures on filtered manifold}

The problem of equivalence between geometric structures on manifolds is typically solved by applying Cartan's method of equivalence to produce a Cartan connection and use its curvature as the natural invariant. 

When the underlying manifolds are filtered, the target Lie algebra for 
a Cartan connection is given by the Tanaka prolongation of the pair 
$(\mathfrak{m},\mathfrak{g}_0)$ where $\mathfrak{g}_0$ is a subalgebra 
of the strata preserving derivations of $\mathfrak{m}$ such that 
$G_0=\exp(\mathfrak{g}_0)$, see \cite{Tanaka1}. 

Consider a graded nilpotent Lie algebra $\mathfrak{m}=\mathfrak{m}_{-k}
\oplus \dots \oplus \mathfrak{m}_{-1} $. Let $\mathfrak{g}_0$ be a 
subalgebra of the grading preserving derivations of $\mathfrak{m}$. The 
Tanaka prolongation of the pair $(\mathfrak{m},\mathfrak{g}_0)$ is the 
graded Lie algebra $\mathfrak{g}(\mathfrak{m},\mathfrak{g}_0)$ where 
$\mathfrak{g}_i(\mathfrak{m},\mathfrak{g}_0)=\mathfrak{m}_i$ for $-k 
\leq i < 0$, 
$\mathfrak{g}_0(\mathfrak{m},\mathfrak{g}_0)=\mathfrak{g}_0$ and for 
each $i > 0$, $\mathfrak{g}_i(\mathfrak{m},\mathfrak{g}_0)$ is 
inductively defined by 
\begin{equation*}
\mathfrak{g}_i (\mathfrak{m},\mathfrak{g}_0)= \Big \{ \varphi \in
\bigoplus_{p>0} \mathfrak{g}_{i-p}(\mathfrak{m},\mathfrak{g}_0) \otimes 
\mathfrak{g}_{-p}^* \ | \ \varphi([X,Y])=[\varphi(X),Y]+[X,\varphi(Y)]
\Big \}.
\end{equation*}
The pair $(\mathfrak{m},\mathfrak{g}_0)$ is said to be of \emph{finite 
type} if $\mathfrak{g}_i(\mathfrak{m},\mathfrak{g}_0)=\{0\}$ for some 
$i$, otherwise it is of \emph{infinite type} and 
$\mathfrak{g}(\mathfrak{m},\mathfrak{g}_0)$ is infinite dimensional.

Consider a first order geometric structure of type $\mathfrak{m}$ on 
the filtered manifold $M
$. Let $\mathfrak{g} = \mathfrak{g}(\mathfrak{m},\mathfrak{g}_0) $ and $%
\mathfrak{g}_{+}=\oplus_{i>0} \mathfrak{g}_{i}$. We define a trivial $H$%
-principal bundle $\mathcal{G} = H\times M$ where 
\begin{equation*}
H=G_0\times \exp(\mathfrak{g}_+).
\end{equation*}
With the given first order geometric structure we associate a family of 
adapted Cartan
connections of type $(\mathfrak{g},\mathfrak{h})$.

\begin{defi}
A Cartan connection $\tilde \omega\colon T\mathcal{G} \to \mathfrak{g}$ is called \emph{adapted} if for an arbitrary section $s\colon M\to \mathcal{G}$, the corresponding  Cartan gauge $\omega \colon TM\to \mathfrak{g}$, has the property that the $\mathfrak{m}$ valued part forms an adapted coframe.
\end{defi}

To obtain invariants of the initial geometric structure we want to associate
a unique adapted Cartan connection to the structure. The construction 
of the desired connection can be done using normalization of the 
structure function.

\begin{defi}
The \emph{curvature function} $\tilde k\colon \mathcal{G} \to 
\func{Hom}(\wedge^2 
\mathfrak{g}_-,\mathfrak{g})$ of a Cartan connection $\tilde{\omega%
}$ is defined by the formula 
\begin{equation*}
\tilde k(X,Y) = \tilde{\Omega} \left(\tilde \omega^{-1}(X),\tilde 
\omega^{-1}(Y)\right).
\end{equation*}
\end{defi}

A built in property of the curvature function is that it is $H$-equivariant, 
i.e., 
\begin{equation*}
R_{h}^* \tilde k = Ad_{h^{-1}} \tilde k . 
\end{equation*}

The vector space $ \func{Hom}(\wedge^2 \mathfrak{g}_-,\mathfrak{g}) = \g \otimes \bigwedge^2 \m^* 
$ has a natural grading. Elements in the subspace $\g_{l}\otimes \m_{-i}^* \wedge \m_{-j}^*$ are 
assigned weight $w=l+i+j$.

\begin{defi}
	We call a Cartan connection and underlying Cartan geometry 
	\emph{regular} if 
$\tilde k$ takes values in $\func{Hom}(\wedge^2 
		\mathfrak{g}_-,\mathfrak{g})_{+}$, where $+$ 
		subscript means a 
		positive degree part of a space.
\end{defi}
\begin{defi}
A subspace $N \subset \func{Hom}(\wedge^2 
\mathfrak{g}_-,\mathfrak{g})_+$ is called a normal module if:
\begin{enumerate}
\item $N$ is an $H$ module with respect to the adjoint action of $H$ on 
$\func{Hom}(\wedge^2 \mathfrak{g}_-,\mathfrak{g})$;
\item $\func{Hom}(\wedge^2  
\mathfrak{g}_-,\mathfrak{g})_+=N\oplus\partial 
\left(\func{Hom}(\mathfrak{g}_-,\mathfrak{g} )_+\right)$, where 
$\partial$ is the Lie algebra differential.
\end{enumerate}
\end{defi}

The existence of unique Cartan connection associated with a geometric 
structure is a fundamental starting point in the study of equivalence 
problems for the given geometric structure. For example regular 
parabolic geometries (i.e. with semi-simple model group) admit a 
natural and uniform notion of normal Cartan connection 
\cite{CapSlovak}. The general result regarding existence of normal 
Cartan connections can be formulated as follows:

\begin{thm}\label{prop:morimoto}
{\cite[p. 92]{Morimoto93}} Consider a geometric structure with an infinitesimal model $(\mathfrak{g},\mathfrak{h}) $ on a filtered manifold. Then for every normal module 
\begin{equation*}
N \subset \func{Hom}(\wedge^2 \mathfrak{g}_-,\mathfrak{g})_+
\end{equation*}
there exists a unique regular Cartan connection adapted to the structure such that the curvature function takes values in $N$.
\end{thm}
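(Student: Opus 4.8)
The plan is to follow Tanaka's prolongation procedure, in Morimoto's formulation, reducing the construction to a chain of purely algebraic normalisations governed by the Lie algebra differential $\partial$ and the decomposition $\Hom(\we^2\g_-,\g)_+=N\op\partial(\Hom(\g_-,\g)_+)$ built into the hypothesis.

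First I would exhibit at least one regular adapted Cartan connection on the trivial bundle $\G=H\times M$. Fixing the identity section $s_0$ and a weighted frame adapted to the structure yields an adapted coframe $\theta\colon TM\to\m=\g_-$; I would then set the $\g_-$-part of the gauge $\omega_0\colon TM\to\g$ equal to $\theta$, its $\gh=\g_0\op\g_+$-part equal to $0$, and spread $\omega_0$ over $\G$ using the equivariance and reproduction axioms, i.e. $(\tilde\omega_0)_{(h,x)}=\Ad_{h^{-1}}\!\circ\func{pr}_M^*\omega_0+\func{pr}_H^*\omega_H$ with $\omega_H$ the Maurer--Cartan form of $H$. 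Axioms (1)--(3) and adaptedness hold by construction. Regularity follows from the constant symbol hypothesis: in the gauge $s_0$ one has $s_0^*\tilde\Omega_0=d\theta+\tfrac12[\theta,\theta]$, and on a pair of adapted frame vectors of weights $-p,-q$ its component of weight $0$ is $[\theta(\cdot),\theta(\cdot)]-\theta([\cdot,\cdot])$ read off in $\m_{-(p+q)}$, which vanishes because $M$ is of type $\m$; all remaining components have strictly positive weight.

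Next I would normalise. Every adapted Cartan connection on $\G$ has the form $\tilde\omega_0+\Psi$ with $\Psi$ horizontal and of positive homogeneity, $\Psi=\sum_{l\ge1}\Psi_l$, where $\Psi_l$ corresponds through the framing to an element of $\Hom(\g_-,\g)_l=\bigoplus_{p>0}\g_{l-p}\ot\g_{-p}^*$, and such a connection is again regular. The heart of the argument --- and the step I expect to be the main obstacle --- is the leading-term formula
\[
\tilde k'_{(l)}=\tilde k_{(l)}+\partial\Psi_l+\bigl(\text{terms of homogeneity }l\text{ in }\Psi_1,\dots,\Psi_{l-1}\bigr),
\]
in which $\partial$ is exactly the Lie algebra differential $\Hom(\g_-,\g)_l\to\Hom(\we^2\g_-,\g)_l$; one proves it by expanding $d(\tilde\omega_0+\Psi)+\tfrac12[\tilde\omega_0+\Psi,\tilde\omega_0+\Psi]$ together with the correction to $(\tilde\omega_0+\Psi)^{-1}$ and sorting everything by weight, the delicate point being the bookkeeping that makes $\partial$ appear as the \emph{exact} homogeneity $l$ contribution. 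Granted this, I would determine $\Psi_1,\Psi_2,\dots$ inductively: having arranged $\tilde k_{(1)},\dots,\tilde k_{(l-1)}\in N$, the homogeneity $l$ part of the curvature is a known element plus $\partial\Psi_l$, and by the normal module decomposition there is a unique value of $\partial\Psi_l$ placing it in $N$, hence a suitable $\Psi_l$. If $(\m,\g_0)$ is of finite type this terminates; otherwise one passes to the projective limit as in \cite{Morimoto93}.

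Finally, for uniqueness I would run the same mechanism in reverse. If $\tilde\omega'=\tilde\omega+\Psi$ are both normal and $\Psi_{l_0}$ is the first nonzero homogeneity, then comparing the homogeneity $l_0$ curvatures gives $\partial\Psi_{l_0}=\tilde k'_{(l_0)}-\tilde k_{(l_0)}\in N\cap\partial(\Hom(\g_-,\g)_+)=\{0\}$, so $\Psi_{l_0}\in\ker\partial\cap\Hom(\g_-,\g)_{l_0}$, which by the inductive definition of the prolongation is $\g_{l_0}(\m,\g_0)$; such a $\Psi_{l_0}$ is the leading term of the gauge transformation by the constant $\exp(\Psi_{l_0})\in\exp(\g_+)$, so peeling off one homogeneity at a time identifies $\tilde\omega$ and $\tilde\omega'$ as isomorphic Cartan geometries --- the uniqueness asserted, and literal after the standard rigidification of \cite{Morimoto93}. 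The remaining technical points, namely global existence of the adapted coframe (automatic here since $\G$ is trivial) and convergence of the construction in infinite type, are routine.
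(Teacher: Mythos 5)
The paper does not prove this statement: it is quoted from Morimoto \cite[p.~92]{Morimoto93} and used as a black box, so there is no in-house argument to measure your proposal against. What you have written is a reconstruction of the standard Tanaka--Morimoto normalisation procedure, and it is correct in outline: building an initial regular adapted connection on the trivial bundle by extending an adapted coframe with vanishing $\gh$-part, checking regularity from the constant-symbol hypothesis, identifying the homogeneity-$l$ change of curvature under $\tilde\omega\mapsto\tilde\omega+\Psi$ with $\partial\Psi_l$ plus terms determined by lower homogeneities, normalising inductively through the splitting $\Hom(\we^2\g_-,\g)_+=N\op\partial\left(\Hom(\g_-,\g)_+\right)$, and getting uniqueness from $N\cap\im\partial=\{0\}$ --- these are exactly the ingredients of Morimoto's proof. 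Two caveats are worth recording. First, all of the real work is concentrated in the leading-term formula you yourself flag as the main obstacle; as written it is asserted rather than proved, so the sketch is only as strong as that computation (which is where Morimoto's argument spends its effort). Second, your uniqueness conclusion is genuinely only uniqueness up to isomorphism of Cartan geometries whenever some $\g_{l_0}(\m,\g_0)\neq 0$ for $l_0>0$, since then $\ker\partial\cap\Hom(\g_-,\g)_{l_0}=\g_{l_0}(\m,\g_0)$ is nontrivial; in the application made in this paper, Lemma~\ref{lem:tanaka} gives $\g_+=0$, so that kernel vanishes and the normal connection is unique on the nose on the fixed bundle $\G=\SO\times U$, which is the form in which the theorem is actually used here. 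With those two points made explicit, your sketch is a faithful account of the cited proof.
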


In the next section we are going to construct various invariant objects 
associated with sub-Lorentzian structures on a contact $3$-manifold 
$M$. The main ingredients are a normal module and the corresponding 
normal Cartan connection given by Theorem \ref{prop:morimoto}. A 
canonical pullback of the Cartan connection to $M$ induces 
differential invariants for the structures at the level of $M$ and 
allows us to construct canonical frames for sub-Lorentzian contact 
structures.

\section{Invariants of 3-dimensional sub-Lorentzian contact structures}

\subsection{First order geometric structures associated with  
3-dimensional sub-Lorentzian contact structures}
The sub-Lorentzian contact structure is given by a 
contact distribution $\mathcal{H}$ and a sub-Lorentzian metric $g$ 
which is defined on $\mathcal{H}$. Let $X_{1}$ and $X_{2}$ be an 
orthogonal frame of $\mathcal{H}$ in the following sense: 
\begin{equation*}
g(X_{1},X_{1})=-1,\,g(X_{1},X_{2})=0,\,g(X_{2},X_{2})=1.
\end{equation*}
We choose a contact form $\eta $ so that $d\eta 
(X_{1},X_{2})=\eta([X_{2},X_{1}])=1$ and denote the corresponding Reeb 
vector field by $X_{3}$, i.e., $X_{3}\lrcorner d\eta =0$ and $\eta 
(X_{3})=-1$. The Lie brackets are then given by $6$ structure functions 
according to the following relations: 
\begin{align*}
\lbrack X_{1},X_{3}]& =c_{13}^{1}X_{1}+c_{13}^{2}X_{2} \\
\lbrack X_{2},X_{3}]& =c_{23}^{1}X_{1}+c_{23}^{2}X_{2} \\
\lbrack X_{1},X_{2}]& =c_{12}^{1}X_{1}+c_{12}^{2}X_{2}+X_{3}.
\end{align*}

The coframe dual to the frame $\{X_1,X_2,X_3\}$ is an adapted 
coframe and denoted $\{\theta_1,\theta_2,\theta_3\}$. Using Cartan's 
formula we get following structure equations for the adapted coframe: 
\begin{align*}
d \theta_1 &= c_{12}^1 \theta_2 \wedge \theta_1 + c_{13}^1 \theta_3 
\wedge
\theta_1 + c_{23}^1 \theta_3 \wedge \theta_2 \\
d \theta_2 &= c_{12}^2 \theta_2 \wedge \theta_1 + c_{13}^2 \theta_3 
\wedge
\theta_1 + c_{23}^2 \theta_3 \wedge \theta_2 \\
d \theta_3 &= \theta_2 \wedge \theta_1
\end{align*}

Since $d^2\theta_3=0$ we immediately get that $c^1_{13}+c^2_{23}=0$, 
hence
letting $c^1_{13}=c$ and $c^2_{23}=-c$ we get 
\begin{align}
[X_1,X_3] &= c X_1 + c_{13}^2 X_2  \notag \\
[X_2,X_3] &= c_{23}^1 X_1 - c X_2  \label{lbraks} \\
[X_1,X_2] &= c_{12}^1 X_1 + c_{12}^2 X_2 + X_3.  \notag
\end{align}
and 
\begin{align}
d \theta_1 &= c_{12}^1 \theta_2 \wedge \theta_1 + c \theta_3 \wedge 
\theta_1
+ c_{23}^1 \theta_3 \wedge \theta_2  \notag \\
d \theta_2 &= c_{12}^2 \theta_2 \wedge \theta_1 + c_{13}^2 \theta_3 
\wedge
\theta_1 - c \theta_3 \wedge \theta_2 \label{SubLorStrucEqs}\\
d \theta_3 &= \theta_2 \wedge \theta_1. \notag
\end{align}

If $\mathcal{H}$ denotes the contact distribution and  $g$ denotes the  
sub-Lorentzian metric on $\mathcal{H}$, then the filtration of the 
tangent bundle is given by 
\begin{align*}
F^{0}  & = \{ 0 \}, \quad  F^{-1}= \mathcal{H}, \quad F^{-2} = 
\mathcal{H}+ [\mathcal{H}, \mathcal{H}] ,
\end{align*}
where $ \func{gr}_{-1} TM=\mathcal{H}$. The type in such cases is 
given by the Heisenberg algebra. In particular 
\begin{equation*}
\mathfrak{m}=\mathfrak{m}_{-2}\oplus \mathfrak{m}_{-1},
\end{equation*}
where $\mathfrak{m}_{-1}=\langle e_1,e_2\rangle$, 
$\mathfrak{m}_{-2}=\langle e_3\rangle$ and $[e_1,e_2]=e_3$.

An adapted to sub-Lorentzian 3-dimensional contacts structure  weighted 
frame takes the form
\begin{align*}
\Phi(p,e_1) & = Y_1 \in H \\
\Phi(p,e_2) & = Y_2\in H \\
\Phi(p,e_3) & = [Y_1,Y_2] \in TM/H \\
\end{align*}
where $g(Y_{1},Y_{1})=-1,$ $g(Y_{1},Y_{2})=0,$ $g(Y_{2},Y_{2})=1.$ 
Since our interest is in $ts$-isometric equivalence, we consider the 
$SO_{1,1}^+(\mathbb{R})$-principle bundle of ts-oriented weighted 
frames. It follows that adapted to the structure frame is of the form
\begin{align*}
E_1(p) & = a_{12}(p) X_1 + a_{22}(p) X_2, \\
E_2(p) & = a_{12}(p) X_1 + a_{22}(p) X_2, \\
E_3(p) & = b_{1}(p) X_1 + b_{2}(p) X_2 + X_3,
\end{align*}
where $\begin{psmallmatrix*}
a_{11}(p) & a_{12}(p)\\
a_{21}(p) & a_{22}(p)
\end{psmallmatrix*}\in\SO$ and $X_1,X_2,X_3$ are as in \eqref{lbraks}.


For sub-Lorentzian structures on a contact three manifold, the pair 
$(\mathfrak{m},\mathfrak{g}_0)$ consists of the Heisenberg algebra 
$\mathfrak{m} = \textrm{span}\, \{e_1,e_2,e_3=[e_1,e_2]\}$ and 
$\mathfrak{g}_0$ is spanned by $\{e_4\}$ with relations 
\begin{equation*}
[e_1,e_2]=e_3,\, [e_4,e_1]=e_2,\, [e_4,e_2]=e_1. 
\end{equation*}
\begin{lem}	\label{lem:tanaka}  
	The Tanaka prolongation for this structure is 
	$\mathfrak{g}(\mathfrak{m},\mathfrak{g}_0)=\mathfrak{m}\oplus 
	\mathfrak{g}_0$.
\end{lem}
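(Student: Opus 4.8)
The plan is to show that the positive part of the Tanaka prolongation vanishes. Writing $\mathfrak{g}_i$ for $\mathfrak{g}_i(\mathfrak{m},\mathfrak{g}_0)$, the identities $\mathfrak{g}_i=\mathfrak{m}_i$ for $i<0$ and $\mathfrak{g}_0(\mathfrak{m},\mathfrak{g}_0)=\mathfrak{g}_0$ hold by definition, so the lemma is equivalent to the assertion that $\mathfrak{g}_i=0$ for every $i\ge 1$.

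The first step is a standard reduction based on the fact that the Heisenberg algebra is generated in degree $-1$, i.e. $\mathfrak{m}_{-2}=[\mathfrak{m}_{-1},\mathfrak{m}_{-1}]$. Suppose $\varphi\in\mathfrak{g}_i$ with $i\ge 1$ and $\varphi|_{\mathfrak{m}_{-1}}=0$. Then for $X,Y\in\mathfrak{m}_{-1}$ the prolongation identity $\varphi([X,Y])=[\varphi(X),Y]+[X,\varphi(Y)]$ forces $\varphi$ to vanish on $[\mathfrak{m}_{-1},\mathfrak{m}_{-1}]=\mathfrak{m}_{-2}$, hence $\varphi=0$. Consequently restriction to $\mathfrak{m}_{-1}$ gives an embedding $\mathfrak{g}_i\hookrightarrow\Hom(\mathfrak{m}_{-1},\mathfrak{g}_{i-1})$ for all $i\ge 1$, and a trivial induction reduces the whole claim to showing $\mathfrak{g}_1=0$.

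To do this I would write out an arbitrary $\varphi\in\mathfrak{g}_1$. By the grading $\varphi$ is a homogeneous degree-$1$ map $\mathfrak{g}_-\to\mathfrak{g}$, hence of the form $\varphi(e_1)=a\,e_4$, $\varphi(e_2)=b\,e_4$, $\varphi(e_3)=c\,e_1+d\,e_2$ with $a,b,c,d\in\R$. Evaluating the prolongation identity on the pair $(e_1,e_2)$ and using $[e_4,e_1]=e_2$, $[e_4,e_2]=e_1$ yields $\varphi(e_3)=a\,e_1-b\,e_2$, so $c=a$ and $d=-b$. Evaluating it on the pairs $(e_1,e_3)$ and $(e_2,e_3)$, where $[e_1,e_3]=[e_2,e_3]=0$ and $[e_4,e_3]=0$ (the latter because $e_4$ is a derivation of $\mathfrak{m}$), yields $d\,e_3=0$ and $c\,e_3=0$ respectively; therefore $a=b=c=d=0$ and $\varphi=0$.

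No serious obstacle arises here — the core computation is finite-dimensional linear algebra. The only subtlety worth noting is that the vanishing of $\mathfrak{g}_1$ genuinely needs the degenerate relations $[e_1,e_3]=[e_2,e_3]=0$: testing the prolongation condition only against $[e_1,e_2]=e_3$ would leave a two-parameter family, and it is the remaining two conditions that collapse it. Once $\mathfrak{g}_1=0$ is established, the degree-$(-1)$ generation argument from the first step propagates the vanishing to all $\mathfrak{g}_i$ with $i\ge 1$, and the lemma follows.
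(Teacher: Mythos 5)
Your proof is correct and follows essentially the same route as the paper: the same linear-algebra computation in degree one, using $[e_1,e_3]=[e_2,e_3]=0$ to kill $\varphi(e_3)$ and $[e_1,e_2]=e_3$ to kill $\varphi(e_1),\varphi(e_2)$. The only difference is that you make explicit the standard reduction (generation of $\mathfrak{m}$ in degree $-1$ plus induction) showing that $\mathfrak{g}_1=0$ forces $\mathfrak{g}_i=0$ for all $i\ge 1$, a step the paper leaves implicit.
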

\begin{proof} Consider an arbitrary element $\varphi\in 
	\mathfrak{g}_0\otimes\mathfrak{g}_{-1}^*\oplus\mathfrak{g}_{-1}\otimes\mathfrak{g}_{-2}^*$
	in the first prolongation. Then 
	\begin{align*}
	0&=\varphi([e_1,e_3])=[\varphi(e_1),e_3]+[e_1,\varphi(e_3)]=[e_1,\varphi(e_3)],
	 \\
	0&=\varphi([e_2,e_3])=[\varphi(e_2),e_3]+[e_2,\varphi(e_3)]=[e_2,\varphi(e_3)].
	\end{align*}
	Since $\varphi(e_3)\in \mathfrak{g}_{-1}$ it must be equal to $0$. 
	Let $\varphi(e_1)=a_1 e_4$ and $\varphi(e_2)=a_2 e_4.$ Then the 
	following
	equality shows that $\varphi=0$: 
	\begin{equation*}
	0=\varphi(e_3)=\varphi([e_1,e_2])= [a_1 e_4 ,e_2] +[e_1,a_2 e_4 ]= 
	a_1 e_1 -
	a_2 e_2. 
	\end{equation*}
\end{proof}

\subsection{Normal Cartan geometry associated with 3-dimensional 
sub-Lorentzian contact structures}

As shown in Lemma \ref{lem:tanaka}, the infinitesimal flat model for 
sub-Lorentzian structures on contact $3$--manifolds is given by the 
$4$-dimensional graded Lie algebra 
\[ \mathfrak{g}=\mathfrak{g}_{-2} \oplus \mathfrak{g}_{-1} \oplus \mathfrak{g}_{0} \] with basis $\{e_1,e_2,e_3,e_4\}$ satisfying the following relations: 
\begin{equation*}
[e_1,e_2]=e_3,\, [e_4,e_1]=e_2 \quad \textrm{and} \quad  [e_4,e_2]=e_1, 
\end{equation*}
where $\mathfrak{g}_{-2}=\mathrm{span}\, \{e_3\}$, $\mathfrak{g}_{-1} = 
\mathrm{span}\, \{e_1, e_2\}$ and $\mathfrak{g}_{0}=\mathrm{span}\, 
\{e_4\}$. 

A Cartan connection for sub-Lorentzian contact structure on a 
$3$-dimensional manifold $M$ is defined on a principle 
$SO_{1,1}^+(\R)$-bundle $\G$. Since we are interested in local 
equivalence, we can assume $\G$ is the trivial bundle 
$SO_{1,1}^+(\R)\times U$ where $U$ is an open subset of $M$. 

Consider an arbitrary Cartan connection $\tilde \omega = \sum_{i=1} 
\tilde \omega_i e_i \colon T\G \to \g$ for sub-Lorentzian structure on a $3$-dimensional contact manifold $M$. The curvature of this 
connection is 	
\begin{equation*}
\tilde \Omega=d \tilde \omega+ \frac12 [\tilde \omega,\tilde 
\omega]=\sum_{l=1}^4 \sum_{1\le i < j \le 3} k^l_{ij} e_l  \ot \tilde 
\omega_i \wedge \tilde \omega_j\colon \wedge^2 T\G \to \g, 
\end{equation*}
and the corresponding curvature function has the form
\begin{equation*}
\tilde k =\sum_{l=1}^4 \sum_{1\le i < j \le 3} k^l_{ij} e_l \ot e_i^* 
\wedge 
e_j^*\colon \G \to \Hom(\wedge^2 \g_-,\g).
\end{equation*}

\begin{prop}
	\label{prop:NormalConn} For an arbitrary sub-Lorentzian structure 
	there exists a unique Cartan connection $\tilde \omega = \sum_i 
	\tilde\omega_i e_i \colon T\G \to \g$ with the curvature function 
	taking values in the following 6-dimensional $SO_{1,1}^+(\R)$-module 
	$\tilde N$: 
	\begin{align*}
		& e_1\otimes e^*_1 \wedge e^*_3 - e_2 \otimes e^*_2\wedge 
		e^*_3;\,\, 
		e_1\otimes
		e^*_2\wedge e^*_3;\,\, e_2\otimes e^*_1\wedge e^*_3; \\
		& e_1\otimes e^*_1\wedge e^*_3+ e_2\otimes e^*_2\wedge e^*_3 
		;\,\,
		e_4\otimes e^*_1\wedge e^*_3;\,\, e_4\otimes e^*_2\wedge e^*_3.
	\end{align*}
\end{prop}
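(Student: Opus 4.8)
The strategy is a direct application of Theorem \ref{prop:morimoto}: I will exhibit a normal module $\tilde N \subset \Hom(\wedge^2 \g_-,\g)_+$ and verify that the six displayed tensors span precisely such a module, so that the claimed unique Cartan connection exists and has curvature function valued in $\tilde N$. Concretely, the plan has three parts: (i) compute the positive-weight part $\Hom(\wedge^2 \g_-,\g)_+$ of the curvature target together with its grading; (ii) compute the image $\partial\bigl(\Hom(\g_-,\g)_+\bigr)$ of the Lie algebra differential; and (iii) check that $\tilde N$ is $SO_{1,1}^+(\R)$-invariant and complements $\im\partial$ inside $\Hom(\wedge^2\g_-,\g)_+$.

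For step (i): recall $\g_-=\m=\g_{-2}\oplus\g_{-1}$ with $\g_{-1}=\langle e_1,e_2\rangle$, $\g_{-2}=\langle e_3\rangle$, and $\g_0=\langle e_4\rangle$, so $\wedge^2\g_-$ has basis $e_1^*\wedge e_2^*$ (weight $2$) and $e_1^*\wedge e_3^*$, $e_2^*\wedge e_3^*$ (weight $3$). An element of $\g_l\otimes\m_{-i}^*\wedge\m_{-j}^*$ has weight $l+i+j$; the positive weights occurring are $w=1,2,3$. Enumerating: $w=1$ comes only from $e_3\otimes e_1^*\wedge e_2^*$; $w=2$ from $e_4\otimes e_1^*\wedge e_2^*$, from $e_1\otimes e_i^*\wedge e_3^*$ and $e_2\otimes e_i^*\wedge e_3^*$ ($i=1,2$)... wait, $e_1$ has degree $-1$ so $-1+1+2=2$, yes; $w=3$ from $e_4\otimes e_i^*\wedge e_3^*$ ($i=1,2$) and from $e_3\otimes e_i^*\wedge e_3^*$ (degree $-2+1+2=1$, so weight $1$ actually) — the bookkeeping must be done carefully, and I will tabulate all contributions systematically. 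Then compute $\partial$ on $\Hom(\g_-,\g)_+$: the relevant source degrees are $\g_0\otimes\g_{-1}^*$ (weight $1$), $\g_{-1}\otimes\g_{-2}^*$ (weight $1$), and $\g_0\otimes\g_{-2}^*$ (weight $2$), using the standard Spencer/Tanaka differential $\partial\varphi(X,Y)=[\varphi(X),Y]+[X,\varphi(Y)]-\varphi([X,Y])$ together with the bracket relations $[e_1,e_2]=e_3$, $[e_4,e_1]=e_2$, $[e_4,e_2]=e_1$.

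The verification then amounts to: (a) the six tensors are $SO_{1,1}^+(\R)$-invariant — here one uses that $\Ad$ of $\exp(te_4)$ acts on $\langle e_1,e_2\rangle$ by a hyperbolic rotation and trivially on $e_3,e_4$, so the listed combinations $e_1\otimes e_1^*\wedge e_3^* \mp e_2\otimes e_2^*\wedge e_3^*$, $e_1\otimes e_2^*\wedge e_3^*$, $e_2\otimes e_1^*\wedge e_3^*$, $e_4\otimes e_i^*\wedge e_3^*$ are easily seen to span a subrepresentation (decomposing into the obvious $1$-dimensional weight spaces and an invariant line); and (b) $\tilde N \oplus \im\partial = \Hom(\wedge^2\g_-,\g)_+$, which is a finite linear-algebra computation of dimensions and independence. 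I expect the main obstacle to be purely bookkeeping: correctly identifying every basis element of $\Hom(\wedge^2\g_-,\g)_+$ with its weight and then pinning down $\im\partial$ precisely enough to see that the complement is exactly $6$-dimensional and coincides with the given span — in particular confirming that the "trace" combination $e_1\otimes e_1^*\wedge e_3^* + e_2\otimes e_2^*\wedge e_3^*$ genuinely lies outside $\im\partial$ rather than being $\partial$ of some $\g_{-1}$-valued cochain. Uniqueness of the resulting Cartan connection is then immediate from Theorem \ref{prop:morimoto}, so nothing further is needed once the normal-module properties are checked.
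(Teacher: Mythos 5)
Your plan is essentially the paper's own proof: exhibit $\tilde N$ as a normal module (an $SO_{1,1}^+(\R)$-submodule complementary to $\partial\bigl(\Hom(\g_-,\g)_+\bigr)$ inside the eleven-dimensional space $\Hom(\wedge^2\g_-,\g)_+$) and invoke Theorem \ref{prop:morimoto}; the paper does exactly this, listing five generators of $\im\partial$ and checking dimensions. The one slip to fix in your weight enumeration is that $e_3\otimes e_1^*\wedge e_2^*$ has weight $-2+1+1=0$ and hence is \emph{excluded} from the positive part --- this is precisely what makes the target space $11$-dimensional, so that the $5$-dimensional image of $\partial$ together with your $6$-dimensional $\tilde N$ fills it.
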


\begin{proof}
In order to use Theorem \ref{prop:morimoto} we need to show that 
$\tilde N$ is an $SO_{1,1}^+(\R)$-module and is complementary to the 
image of Lie algebra differential 
\[ \partial \colon 
\func{Hom}(\mathfrak{g}_-, \mathfrak{g})_+ \to\func{Hom}(\wedge^2 
\mathfrak{g}_-,\mathfrak{g})_+ .\]
The image of the differential $\partial$ is generated by the following $5$ elements: 
	\begin{align*}
		& e_1\otimes e^*_1 \wedge e^*_2- e_3\otimes e^*_2\wedge 
		e^*_3;\,\, 
		e_2\otimes
		e^*_1\wedge e^*_2 + e_3\otimes e^*_1\wedge e^*_3; \\
		& e_1\otimes e^*_1\wedge e^*_2 ;\,\, e_2\otimes e^*_1\wedge 
		e^*_2 ;\,\,
		e_4\otimes e^*_1\wedge e^*_2 - e_2\otimes e^*_1\wedge e^*_3 - 
		e_1\otimes
		e^*_2\wedge e^*_3,
	\end{align*}
	and doesn't intersect $\tilde N$.
	The space $\func{Hom}(\wedge^2 \mathfrak{g}_-,\mathfrak{g})_+$ is
	$11$-dimensional, therefor $\tilde N$ is complementary to $\func{im} 
	\partial$. 
	
	An 	element $h \in SO_{1,1}^+(\mathbb{R})$ acts naturally on 
	2-dimensional
	space $\langle e_1,e_2\rangle $, acts by inverse transform $h^{-1}$ on $%
	\langle e^*_1 , e^*_2 \rangle $ and trivially by identity on 
	$\{e_3,e_4, e^*_3 , e^*_4 \}$.
	This defines and action of $SO_{1,1}^+(\R)$ on $\func{Hom}(\wedge^2 
	\mathfrak{g}_-,%
	\mathfrak{g})_+$ and we can see that $\tilde N$ is in fact an 
	$SO_{1,1}^+(\R)$-module.
\end{proof}

\begin{prop}\label{effective}
		Assume that the the curvature function of the Cartan connection 
		belongs to the module $\tilde N$ defined in 
	Proposition \ref{prop:NormalConn}. Then the coefficient of 
\[ e_1\otimes e^*_1\wedge e^*_3+  e_2\otimes e^*_2\wedge e^*_3 \]
	 is equal to zero. 

	 Furthermore, the coefficients of $e_4\otimes e^*_1\wedge e^*_3$ and 
	 $e_4\otimes e^*_2\wedge e^*_3$ are linear combinations of the covariant 
	 derivatives of the coefficients of 
	 \[
	 e_1\otimes e^*_1 \wedge e^*_3- e_2\otimes e^*_2\wedge 
	 e^*_3, \quad e_1\otimes
	 e^*_2\wedge e^*_3 \quad \textrm{and} \quad e_2\otimes e^*_1\wedge 
	 e^*_3 
	 .\]
\end{prop}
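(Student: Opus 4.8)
The plan is to derive both assertions from the Bianchi identity for the normalised connection, that is, from the integrability conditions $d(d\tilde\omega_i)=0$, $i=1,\dots,4$, for the $\g$-valued one-form $\tilde\omega=\sum_i\tilde\omega_i e_i$. First I would record the structure equations. Since the curvature function takes values in $\tilde N$, the only coefficients $k^l_{ij}$ that can be non-zero are $k^1_{13},k^1_{23},k^2_{13},k^2_{23},k^4_{13},k^4_{23}$; in particular $k^l_{12}=0$ for all $l$ and $k^3_{13}=k^3_{23}=0$. Using $\tilde\Omega=d\tilde\omega+\tfrac12[\tilde\omega,\tilde\omega]$ together with the relations $[e_1,e_2]=e_3$, $[e_4,e_1]=e_2$, $[e_4,e_2]=e_1$ of $\g$, this gives
\begin{align*}
 d\tilde\omega_1 &= \tilde\omega_2\wedge\tilde\omega_4 + k^1_{13}\,\tilde\omega_1\wedge\tilde\omega_3 + k^1_{23}\,\tilde\omega_2\wedge\tilde\omega_3,\\
 d\tilde\omega_2 &= \tilde\omega_1\wedge\tilde\omega_4 + k^2_{13}\,\tilde\omega_1\wedge\tilde\omega_3 + k^2_{23}\,\tilde\omega_2\wedge\tilde\omega_3,\\
 d\tilde\omega_3 &= \tilde\omega_2\wedge\tilde\omega_1,\\
 d\tilde\omega_4 &= k^4_{13}\,\tilde\omega_1\wedge\tilde\omega_3 + k^4_{23}\,\tilde\omega_2\wedge\tilde\omega_3.
\end{align*}
Expressing the curvature function in the basis of $\tilde N$ from Proposition \ref{prop:NormalConn}, the coefficient of $e_1\otimes e^*_1\wedge e^*_3+e_2\otimes e^*_2\wedge e^*_3$ is $\tfrac12(k^1_{13}+k^2_{23})$, that of $e_1\otimes e^*_1\wedge e^*_3-e_2\otimes e^*_2\wedge e^*_3$ is $\tfrac12(k^1_{13}-k^2_{23})$, that of $e_1\otimes e^*_2\wedge e^*_3$ is $k^1_{23}$, and that of $e_2\otimes e^*_1\wedge e^*_3$ is $k^2_{13}$.

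For the first assertion I would differentiate the third equation: $0=d(d\tilde\omega_3)=d\tilde\omega_2\wedge\tilde\omega_1-\tilde\omega_2\wedge d\tilde\omega_1$. Substituting the first two equations and collecting the coefficient of $\tilde\omega_1\wedge\tilde\omega_2\wedge\tilde\omega_3$ yields $k^1_{13}+k^2_{23}=0$, which is precisely the vanishing of the coefficient of $e_1\otimes e^*_1\wedge e^*_3+e_2\otimes e^*_2\wedge e^*_3$.

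For the second assertion I would expand $dk^l_{ij}=\sum_m(\nabla_m k^l_{ij})\,\tilde\omega_m$ in the coframe $\{\tilde\omega_1,\dots,\tilde\omega_4\}$, differentiate the first two structure equations, and isolate the coefficient of $\tilde\omega_1\wedge\tilde\omega_2\wedge\tilde\omega_3$ in $0=d(d\tilde\omega_1)$ and $0=d(d\tilde\omega_2)$. The terms quadratic in the $k^l_{ij}$ produce only components of the form $\tilde\omega_i\wedge\tilde\omega_j\wedge\tilde\omega_4$, hence drop out, and one is left with
\[
 k^4_{13}=\nabla_2 k^1_{13}-\nabla_1 k^1_{23},\qquad
 k^4_{23}=\nabla_1 k^2_{23}-\nabla_2 k^2_{13}.
\]
Combined with $k^2_{23}=-k^1_{13}$ from the first step, this exhibits $k^4_{13}$ and $k^4_{23}$ as linear combinations of the $\g_{-1}$-directional derivatives $\nabla_1,\nabla_2$ — the horizontal components of the covariant derivative of the $\SO$-equivariant curvature function — of the coefficients of $e_1\otimes e^*_1\wedge e^*_3-e_2\otimes e^*_2\wedge e^*_3$, $e_1\otimes e^*_2\wedge e^*_3$ and $e_2\otimes e^*_1\wedge e^*_3$, which is the claim. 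The remaining components of the Bianchi identity involve, in addition, the $\g_0$-directional derivative $\nabla_4 k^l_{ij}$; these are determined algebraically by $\SO$-equivariance and carry no new information.

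The computation is routine. The one point that needs care is the sign bookkeeping in the wedge products when expanding $d(d\tilde\omega_i)$, together with the observation that for the equivariant curvature function $\tilde k$ the covariant derivative along a horizontal direction $\tilde\omega^{-1}(e_m)$, $m\in\{1,2,3\}$, coincides with $\tilde\omega^{-1}(e_m)(\tilde k)$, i.e. with the $\tilde\omega_m$-component of $d\tilde k$. Once this is granted, both assertions follow directly from the structure equations above.
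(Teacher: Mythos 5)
Your proof is correct and follows essentially the same route as the paper: the paper applies the Bianchi identity $d\tilde\Omega+[\tilde\omega,\tilde\Omega]=0$ projected onto $\g_{-2}$ and $\g_{-1}$, which is exactly your computation of the $\tilde\omega_1\wedge\tilde\omega_2\wedge\tilde\omega_3$-component of $d(d\tilde\omega_3)=0$ and of $d(d\tilde\omega_1)=d(d\tilde\omega_2)=0$ for the normalised structure equations. The resulting relations $k^1_{13}+k^2_{23}=0$ and the expressions for $k^4_{13},k^4_{23}$ in terms of the directional derivatives $\tilde X_m=\tilde\omega^{-1}(e_m)$ agree with the paper's (up to an immaterial overall sign in the formula for $k_6$).
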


\begin{proof}
 Let $\tilde\Omega = d \tilde \omega + \frac12 [\tilde{\omega},\tilde{\omega}]$ 
 be the curvature of the normal Cartan connection $\tilde{\omega}$. The fact 
 that curvature function $\tilde{k}(\cdot,\cdot)=\tilde\Omega\left(\tilde 
 \omega^{-1}(\cdot),\tilde 
 \omega^{-1}(\cdot)\right)$ belongs to $\tilde N$ is equivalent by definition to
 \begin{align}
 \tilde \Omega =& 
 k_1 (e_1 \otimes \tilde\omega_1 \we \tilde\omega_3 + e_2 \otimes \tilde\omega_2\we \tilde\omega_3) +
 k_2 (e_1 \otimes \tilde\omega_1\we \tilde\omega_3-e_2 \otimes \tilde\omega_2\we \tilde\omega_3) +
 \nonumber \\
 & k_3 e_1 \otimes \tilde\omega_2\we \tilde\omega_3 + 
 k_4 e_2 \otimes \tilde\omega_1\we \tilde\omega_3 +
 k_5 e_4 \otimes \tilde\omega_1\we \tilde\omega_3 + 
 k_6 e_4 \otimes \tilde\omega_2\we \tilde\omega_3 \label{curvtexpress} .
 \end{align}

 The Bianchi identity \cite[p. 193]{Sharpe} states that 
	\begin{equation}  \label{eq:bianchi}
		d\tilde\Omega + [\tilde\omega,\tilde\Omega]=0.
	\end{equation}
        By \eqref{curvtexpress},   $d \tilde \Omega$ has a trivial 
        projection onto $e_3$. Moreover, using $\tilde \omega = \sum_i 
        e_i \otimes \tilde\omega_i $ and directly calculating gives 
        \[[\tilde\omega,\tilde\Omega]= 2 k_1 e_3 \otimes  
        \tilde\omega_1 \we \tilde\omega_2 \we \tilde\omega_3 	
        \mod{\langle e_1,e_2 \rangle} \] and so $k_1=0$.

Consider now the projection of Bianchi identity on $\g_{-1}=\langle 
e_1,e_2\rangle.$ The $\g_{-1}$-part of $[\tilde\omega,\tilde\Omega]$ is given by
\begin{equation}\label{p3:eq1}
[e_1\tilde\omega_1 + e_2\tilde\omega_2, k_5 e_4 \otimes 
\tilde\omega_1\we \tilde\omega_3 + 
k_6 e_4 \otimes \tilde\omega_2\we \tilde\omega_3 ] = k_5 e_1  
\tilde\omega_1\we \tilde\omega_2\we \tilde\omega_3 - k_6 e_2  
\tilde\omega_1\we \tilde\omega_2\we \tilde\omega_3.
\end{equation}
 The formula for $\g_{-1}$-part of $d\tilde\Omega$ is also 
 straightforward:
 \begin{equation} \label{p3:eq2}
 d\tilde\Omega = \left( (\tilde X_1 k_3 - \tilde 
 X_2 k_2) e_1 -  
 (\tilde X_2 k_4 + \tilde X_1 k_2) e_2 \right) \tilde\omega_1\we 
 \tilde\omega_2\we \tilde\omega_3 \mod{\tilde\omega_4},
 \end{equation}
 where $\tilde X_i=\tilde\omega^{-1}(e_i)\in \Gamma(T \G)$ are universal
 covariant differentiations defined by the normal Cartan connection, 
 see \cite[p. 194]{Sharpe}.
 Comparing \eqref{p3:eq1} and \eqref{p3:eq2} we conclude that
 \[ k_5=\tilde  X_2 k_2 - \tilde X_1 k_3 ,\,\, k_6=\tilde X_2 k_4 + 
 \tilde X_1 k_2. \]
\end{proof}

We denote by $N$ the ``essential'' part of $\tilde N$ which by 
Proposition \ref{effective} is the submodule generated by 
\begin{equation} \label{modN}
\begin{aligned}
K &= e_1\otimes  e^*_1 \wedge  e^*_3- e_2\otimes e^*_2\wedge e^*_3, 
\\
X &= e_1\otimes e^*_2\wedge e^*_3, \\
Y &= e_2\otimes e^*_1\wedge e^*_3,
\end{aligned}
\end{equation}


Since $SO_{1,1}^+(\R)$ acts on $e^*_3$ by identity we see that actually 
$SO_{1,1}^+(\R)$ acts on $K$, $X$ and $Y$ as on $e_1\otimes 
e^*_1-e_2\otimes e^*_2$, $e_1\otimes e^*_2$ and 
$e_2\otimes e^*_1$. The later is exactly the adjoint action of 
$SO_{1,1}^+(\mathbb{R})$ on $\gsl_2(\mathbb{R})$ given by $A 
\to T A T^{-1}$ where 
\begin{equation}
K=\begin{psmallmatrix}
1 & 0 \\
0 & -1
\end{psmallmatrix}, \quad  
X=
\begin{psmallmatrix}
0 & 1 \\
0 & 0
\end{psmallmatrix},  \quad  
Y=
\begin{psmallmatrix}
0 & 0 \\
1 & 0
\end{psmallmatrix}   \quad  \textrm{and} \quad 
T=
\begin{psmallmatrix}
\cosh(t) & \sinh(t) \\
\sinh(t) & \cosh(t)
\end{psmallmatrix}. \label{SL2onN}
\end{equation}


\subsection{Local computation of normal Cartan connection}

Consider a principle $SO_{1,1}^+(\R)$-bundle $\G=SO_{1,1}^+(\R)\times 
U$ where $U$ is an open subset of $M$. Let 
$\tilde{\omega}\colon 
T\G\to \g$ be an arbitrary Cartan connection adapted to the sub-Lorentzian 
structure $(M,\mathcal{H},g)$. For an arbitrary section $s\colon U\to 
\G$ the 
corresponding Cartan gauge $\omega=s^*\tilde\omega\colon 
TU\to\mathfrak{g}$ 
adapted to the sub-Lorentzian structure has the form: 
\begin{equation*}
\omega=e_1\otimes\omega_1+e_2\otimes\omega_2+e_3\otimes\omega_3+e_4\otimes%
\omega_4,
\end{equation*}
where 
\begin{align*}
\omega_1&=\bar a_{11}\theta_1+\bar a_{12}\theta_2 + \bar 
\alpha_1\theta_3, \\
\omega_2&=\bar a_{21}\theta_1+ \bar a_{22}\theta_2+\bar 
\alpha_2\theta_3, \\
\omega_3&=\theta_3, \\
\omega_4&=\bar \beta_1\theta_1+\bar \beta_2\theta_2+\bar \beta_3\theta_3
\end{align*}
and the matrix $(\bar a_{ij})$ is an element of $SO_{1,1}^+(\mathbb{R})$. 
Changing the section by the right action 
of suitable element in $SO_{1,1}^+(\mathbb{R})$ according to formula 
\ref{intro:cartan_change} gives the following lemma.

\begin{lem}
\label{lem:section}  There exists a unique section such that a Cartan gauge
adapted to  the sub-Lorentzian structure has the form: 
\begin{align*}
\omega_1&=\theta_1 + \alpha_1\theta_3, \\
\omega_2&=\theta_2 + \alpha_2\theta_3, \\
\omega_3&=\theta_3, \\
\omega_4&=\beta_1\theta_1+\beta_2\theta_2+\beta_3\theta_3.
\end{align*}
\end{lem}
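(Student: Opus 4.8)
The plan is to exploit the gauge freedom of the structure group through the transformation formula \eqref{intro:cartan_change}. By Lemma~\ref{lem:tanaka} the Tanaka prolongation is $\g=\m\oplus\g_0$, so $\g_+=\{0\}$ and $H=G_0=\SO$; since $\SO\cong\R$ through $\sigma\mapsto\exp(\sigma e_4)=\begin{psmallmatrix}\cosh\sigma & \sinh\sigma\\ \sinh\sigma & \cosh\sigma\end{psmallmatrix}$, a change of section $\bar s=sh$ is recorded by a single globally defined smooth function $\sigma\colon U\to\R$. The first step is to make the action of $h$ explicit: from $[e_4,e_1]=e_2$, $[e_4,e_2]=e_1$ one gets that $\Ad_{h^{-1}}$ is the hyperbolic rotation by $-\sigma$ on $\langle e_1,e_2\rangle$, that it fixes $e_3=[e_1,e_2]$ because $\det h=1$, that it fixes $e_4$ because $\g_0$ is abelian, and that $h^{-1}dh=d\sigma\ot e_4$. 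Substituting into \eqref{intro:cartan_change} shows that the gauge components transform by
\begin{align*}
\bar\omega_1 &= \cosh\sigma\,\omega_1-\sinh\sigma\,\omega_2, &
\bar\omega_2 &= -\sinh\sigma\,\omega_1+\cosh\sigma\,\omega_2, \\
\bar\omega_3 &= \omega_3, &
\bar\omega_4 &= \omega_4+d\sigma .
\end{align*}

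The second step is to insert the general adapted gauge given just before the statement. Writing the $\SO$-valued matrix $(\bar a_{ij})$ as $\begin{psmallmatrix}\cosh\rho & \sinh\rho\\ \sinh\rho & \cosh\rho\end{psmallmatrix}$ for a smooth $\rho\colon U\to\R$, the $\langle\theta_1,\theta_2\rangle$-parts of $\bar\omega_1,\bar\omega_2$ become a composition of hyperbolic rotations of angles $-\sigma$ and $\rho$, hence a hyperbolic rotation of angle $\rho-\sigma$. The choice $\sigma=\rho$ turns these into $\theta_1$ and $\theta_2$, so $\bar\omega_1=\theta_1+\alpha_1\theta_3$ and $\bar\omega_2=\theta_2+\alpha_2\theta_3$, with $\alpha_1,\alpha_2$ the transformed $\theta_3$-coefficients (their precise form is irrelevant). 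The identity $\bar\omega_3=\theta_3$ is unaffected, and $\bar\omega_4$ picks up the exact $1$-form $d\rho$, which expands in the coframe $\{\theta_1,\theta_2,\theta_3\}$, so $\bar\omega_4$ stays of the form $\beta_1\theta_1+\beta_2\theta_2+\beta_3\theta_3$. This proves existence, the required section being $\bar s=sh$ with $h$ the $\SO$-valued map of hyperbolic angle $\rho$.

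For uniqueness, observe that two sections giving the normal form differ by some $h$ of angle $\sigma$; applying the displayed transformation to a gauge already in normal form gives $\bar\omega_2=-\sinh\sigma\,(\theta_1+\alpha_1\theta_3)+\cosh\sigma\,(\theta_2+\alpha_2\theta_3)$, whose $\theta_1$-coefficient is $-\sinh\sigma$; since the normal form has no $\theta_1$-term in $\omega_2$, we get $\sigma\equiv 0$, i.e. $h=\mathrm{id}$. I do not expect a genuine obstacle here: the content is simply the remark that the one-parameter group $\SO$ acts simply transitively on the hyperbolic rotations of the horizontal coframe $(\omega_1,\omega_2)$ modulo $\theta_3$. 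The only step needing attention is the bookkeeping that keeps $\Ad_{h^{-1}}$ fixing $e_3$ and $e_4$ and confines $h^{-1}dh$ to the $e_4$-slot, so that normalising $\omega_1$ and $\omega_2$ neither perturbs $\omega_3$ nor introduces anything in $\omega_4$ beyond the harmless exact term.
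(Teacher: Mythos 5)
Your proof is correct and follows exactly the route the paper indicates (the paper gives no detailed proof, only the remark that one changes the section by a suitable $SO_{1,1}^+(\R)$-valued map via formula \eqref{intro:cartan_change}); your computation of the adjoint action, the choice $\sigma=\rho$, and the simple-transitivity argument for uniqueness supply precisely the missing details. No gaps.
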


If $\omega=s^* \tilde \omega$ then $ \Omega= s^* \tilde \Omega \colon\wedge^2 
TM\to \mathfrak{g}$ is given by $ \Omega=d\omega+\frac12[\omega,\omega]$ which 
for our specific case takes the
form: 
\begin{align*}
\Omega&=e_1\otimes\Omega_1+e_2\otimes\Omega_2+e_3\otimes\Omega_3+e_4\otimes \Omega_4,
\end{align*}
where 
\begin{align*}
\Omega_1&=d\omega_1-\omega_2\wedge\omega_4, \\
\Omega_2&=d\omega_2-\omega_1\wedge\omega_4, \\
\Omega_3&=d\omega_3-\omega_2\wedge\omega_1, \\
\Omega_4&=d\omega_4.
\end{align*}

In accordance with Propositions \ref{prop:NormalConn}  and \ref{effective}, the normal Cartan connection satisfies the conditions 
\begin{align*}
\Omega_3 &=0, \\
\Omega_i &=0\mod{\omega_3},\,i=1,2,4.
\end{align*}
The first condition is : 
\begin{equation*}
\Omega_3= d\omega_3-\omega_2\wedge\omega_1 =
\alpha_1\omega_3\wedge\omega_2-\alpha_2\omega_3 \wedge \omega_1=0.
\end{equation*}
Therefore, $\alpha_1=\alpha_2=0$. The second normalization condition is $%
\Omega_1 \mod \om_3=\Omega_2 \mod \om_3=0$. This condition gives us: 
\begin{align*}
\Omega_1&=d\omega_1-\omega_2\wedge\omega_4= (\beta_1-c^1_{12}) \omega_1
\wedge \omega_2 =0\mod \om_3, \\
\Omega_2&=d\omega_2-\omega_1\wedge\omega_4= (-\beta_2-c^2_{12}) \omega_1
\wedge \omega_2 =0\mod \om_3,
\end{align*}
From the formulas above we obtain $\beta_1= c^1_{12}$ and $\beta_2=-c^2_{12}.
$ The last normalization condition is $\Omega_4 \mod \om_3=0:$ 
\begin{equation*}
\Omega_4=d\omega_4=( -\beta_3 - X_2(\beta_1) + X_1(\beta_2) -\beta_1 
c_{12}^1 -
\beta_2 c_{12}^2 ) \omega_1 \wedge \omega_2 =0\mod \om_3
\end{equation*}
We obtain that $\beta_3 = (c_{12}^2)^2 - (c^1_{12})^2 -  X_1(c^2_{12}) 
- X_2(c^1_{12}).$

To summarize, the coefficients of $\Omega$ have the form: 
\begin{align} \label{curvf}
\Omega_1=& -c \omega_1\wedge\omega_3 -
(c^1_{23}+\beta_3)\omega_2\wedge\omega_3 , \\
\Omega_2=& -(c^2_{13}+\beta_3)\omega_1\wedge\omega_3 + c
\omega_2\wedge\omega_3, \\
\Omega_3=&0, \\
\Omega_4=&\left(X_1(\beta_3)-X_3(\beta_1) - \beta_1 c - \beta_2 
{c^2_{13}}
\right) \omega_1\wedge\omega_3 \\
& \quad + \left(X_2(\beta_3) - X_3( \beta_2) -\beta_1 c_{23}^1 + 
\beta_2 c
\right) \omega_2\wedge\omega_3.
\end{align}
Where $\beta_1= c^1_{12}$, $\beta_2=-c^2_{12}$ and $\beta_3 = 
(c_{12}^2)^2 - (c^1_{12})^2 -  X_1(c^2_{12}) 
- X_2(c^1_{12}).$


\subsection{Invariants of sub-Lorentzian structure} \label{sec:Inv}A 
normal Cartan connection is a special type of absolute parallelism. 
The problem of equivalence of absolute parallelisms is a classical 
subject and was studied in details for example in \cite{Sternberg}. 
In particular, local invariants of an absolute parallelism are precisely its structure function and its consecutive 
covariant derivatives. A finite number of these invariants uniquely (up 
to local equivalence) determines the absolute parallelism.

Applied to a normal Cartan connection, this means that all local 
invariants of a given Cartan connection can be derived from its structure 
function and consecutive covariant derivatives. However, due to 
 $H$-equivariance of the structure function $\tilde 
k\colon\G\to\Hom(\we^2\g_-,\g)$, one can simplify $k$ by introducing a 
canonical section $s\colon M\to \G$ and considering a canonical 
pullback $k=s^* \tilde k$. This allows us to obtain invariants 
generated by $k$ that are defined on the manifold $M$ instead of the 
principal bundle $\G$.

Let $k_N$ be the part of the curvature function taking values in the module 
$N$ generated by \eqref{modN}. According to Proposition 
\ref{effective}, the entire curvature function $k$ can be expressed 
through $k_N$ using covariant differentiation. Therefore we need only 
 focus our attention on $k_N\colon M\to \gsl_2(\R)$. 

Under the adjoint action,  the $SO_{1,1}^+(\R)$-module 
$\gsl_2(\mathbb{R})$ has 2 irreducible submodules. One is generated by  
the matrix 
\begin{equation}  \label{e01}
f_0=\begin{pmatrix}
0 & -1 \\ 
-1 & 0
\end{pmatrix}.
\end{equation}
and the other generated by the pair of matrices 

\begin{equation}  \label{e02d}
f_1=\begin{pmatrix}
-1 & 0 \\ 
0 & 1
\end{pmatrix} \quad \textrm{and} \quad 
f_2=\begin{pmatrix}
0 & -1 \\ 
1 & 0
\end{pmatrix}.
\end{equation}

If we write $k_N=\kappa f_0 + a f_1 +bf_2$ then
\begin{align*}
\kappa &= \beta_3 + \frac{c^1_{23}+c^2_{13}}2= (c_{12}^2)^2 - (c^1_{12})^2 -
X_1(c^2_{12}) - X_2(c^1_{12})+ \frac{c^1_{23}+c^2_{13}}2,\\
a&=c\\
b&=(c_{23}^1-c_{13}^2)/2
\end{align*}
Under the change of section $\bar s=R_h(s),$ $h\colon 
M\to SO^+_{1,1}$ we get \[ \bar k_N=\bar s^*\tilde k_N = Ad_{h^{-1}} 
(s^*\tilde k_N) = Ad_{h^{-1}}(k_N). \]
 Since 
$SO_{1,1}^+(\mathbb{R})$ acts on \eqref{e01} as the identity, 
 $\kappa$ doesn't depend on the choice of section 
and is an invariant. We summarize this observation in the following 
proposition. 

\begin{prop}
The following expression  
\begin{equation}
\kappa = \beta_3 + \frac{c^1_{23}+c^2_{13}}2= (c_{12}^2)^2 - (c^1_{12})^2 -
X_1(c^2_{12}) - X_2(c^1_{12})+ \frac{c^1_{23}+c^2_{13}}2.
\end{equation}
is an invariant of time-space orientation preserving structure. 
\end{prop}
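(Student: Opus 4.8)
The plan is to show that the scalar $\kappa$, exhibited in the proposition as the $f_0$-component of the essential curvature $k_N$, transforms trivially under the structure group $\SO$ and is therefore independent of all the choices made in writing it down. First I would note that, by Lemma~\ref{lem:section}, there is a unique section $s\colon U\to\G$ bringing the Cartan gauge to the normalized form, so that the quantities $c$, $c^i_{jk}$, $\beta_i$ appearing in the formula are well defined functions of the adapted coframe $\{\theta_1,\theta_2,\theta_3\}$ on $M$ (equivalently, of the orthogonal frame $X_1,X_2,X_3$). The explicit expressions for $\beta_1,\beta_2,\beta_3$ recorded just before Section~\ref{sec:Inv}, together with the structure equations \eqref{SubLorStrucEqs}, give $\kappa = (c^2_{12})^2-(c^1_{12})^2 - X_1(c^2_{12}) - X_2(c^1_{12}) + \tfrac12(c^1_{23}+c^2_{13})$ as claimed; this part is just bookkeeping from the local computation of the normal connection.

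The substantive step is the invariance. Here I would invoke the $H$-equivariance of the curvature function, $R_h^*\tilde k = \Ad_{h^{-1}}\tilde k$, restricted to its essential part $\tilde k_N$ valued in $N\cong\gsl_2(\R)$. Changing the section $s$ to $\bar s = R_h(s)$ with $h\colon M\to\SO$ therefore changes $k_N$ to $\bar k_N = \Ad_{h^{-1}} k_N$, i.e. by the conjugation action $A\mapsto T^{-1}AT$ with $T$ of the form \eqref{SL2onN}. I then use the decomposition of the $\SO$-module $\gsl_2(\R)$ into the irreducible pieces $\langle f_0\rangle$ and $\langle f_1,f_2\rangle$ from \eqref{e01}–\eqref{e02d}: since $f_0 = \begin{psmallmatrix}0&-1\\-1&0\end{psmallmatrix}$ commutes with every $T=\begin{psmallmatrix}\cosh t&\sinh t\\ \sinh t&\cosh t\end{psmallmatrix}$ (a one-line matrix check, or simply the observation that $f_0$ spans the trivial subrepresentation already identified in the text), the $f_0$-component of $k_N$ is unchanged by the adjoint action. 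Writing $k_N = \kappa f_0 + a f_1 + b f_2$, this says exactly that $\kappa$ does not depend on the choice of section, hence is a genuine function on $M$ attached to the structure.

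Finally I would observe that $\kappa$ is moreover invariant under $ts$-isometries: any local $ts$-isometry of $(M,\mathcal H,g)$ lifts to a bundle isomorphism of $\G$ intertwining the normal Cartan connections (by the uniqueness in Theorem~\ref{prop:morimoto}), hence intertwines the curvature functions and their essential parts, and thus pulls back $\kappa$ to $\kappa$. Combining the two invariance statements gives the proposition. The only mildly delicate point is making sure nothing depends on the initial choice of orthogonal frame $X_1,X_2$ beyond an element of $\SO$ — but that is precisely the content of working on the $\SO$-bundle $\G$ and of Lemma~\ref{lem:section}, so the argument closes without further work. I do not expect any real obstacle here; the proposition is essentially a corollary of the module decomposition \eqref{e01}–\eqref{e02d} together with the equivariance already established.
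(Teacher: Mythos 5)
Your proposal is correct and follows essentially the same route as the paper: the authors likewise write $k_N=\kappa f_0+af_1+bf_2$, note that a change of section acts on $k_N$ by $\Ad_{h^{-1}}$, and conclude that $\kappa$ is an invariant because $f_0$ spans the trivial $\SO$-subrepresentation of $\gsl_2(\R)$. Your additional remark that $ts$-isometries intertwine the normal Cartan connections (via uniqueness in Theorem~\ref{prop:morimoto}) is a harmless elaboration of what the paper leaves implicit.
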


Consider now the submodule of $\gsl_2(\R)$ generated by
\begin{equation}  \label{e02}
\begin{pmatrix}
a & b \\ 
-b & -a%
\end{pmatrix}%
.
\end{equation}
The corresponding part of $k_N$ is 
\begin{equation}  \label{inv_h}
 h=af_1+bf_2= 
\begin{pmatrix}
c & \frac{c^1_{23}-c^2_{13}}2 \\ 
\frac{c^2_{13}-c^1_{23}}2 & -c%
\end{pmatrix}%
\end{equation}
and it depends on the choice of section $s\colon M\to \G$.
To obtain an absolute invariant of the structure we factor the expression at \eqref{inv_h} by the action of $SO_{1,1}^+(\R)$.

\begin{prop}
\label{p2} For every 3-dimensional contact sub-Lorentzian $ts$-oriented manifold
there exists a section $s$ such that the invariant $ h$ has the 
following form at a given point:

\begin{enumerate}
\item \label{p2:1} If $ h=0$ then:
\begin{equation*}
 h\in \left\{
\begin{pmatrix}
0 & 0 \\ 
0 & 0
\end{pmatrix}
,\,
\begin{pmatrix}
1 & \pm 1 \\ 
\mp 1 & -1
\end{pmatrix}
,\, 
\begin{pmatrix}
-1 & \pm 1 \\ 
\mp 1 & 1
\end{pmatrix}
\right\}. 
\end{equation*}

\item \label{p2:2}  If $\det h>0$ then:
\begin{equation*}
 h=
\begin{pmatrix}
0 & \chi \\ 
-\chi & 0
\end{pmatrix}.
\end{equation*}

\item  \label{p2:3}  If $\det  h<0$ then:
\begin{equation*}
 h=
\begin{pmatrix}
\chi & 0 \\ 
0 & -\chi
\end{pmatrix}.
\end{equation*}
\end{enumerate}

Moreover if $ h\neq 0$ such a section is unique.
\end{prop}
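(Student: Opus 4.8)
The plan is to use the change-of-section formula preceding the proposition, which says that $k_N$, and hence its component $h$ in the submodule $\Span{f_1,f_2}\subset\gsl_2(\R)$ of \eqref{e02d}, transforms under the adjoint action of $\SO$; normalizing $h$ is then the same as choosing a representative of each $\SO$-orbit in $\Span{f_1,f_2}$. So the whole statement reduces to an elementary orbit analysis for $\SO$ acting linearly on a plane.

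First I would write the action down explicitly. With $h=af_1+bf_2$ as in \eqref{inv_h} and $T$ the matrix of \eqref{SL2onN}, a short computation gives $Tf_1T^{-1}=\cosh(2t)f_1-\sinh(2t)f_2$ and $Tf_2T^{-1}=-\sinh(2t)f_1+\cosh(2t)f_2$; hence $\Span{f_1,f_2}$ is $\SO$-invariant and the action on coordinates is
\[
a'=a\cosh(2t)-b\sinh(2t),\qquad b'=b\cosh(2t)-a\sinh(2t).
\]
In particular $a^2-b^2=-\det h$ is preserved, which reconfirms that $\det h$ is an absolute invariant, and the $\SO$-action on $(a,b)$ is the standard action on the Minkowski plane. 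The proposition then follows by selecting, for each point, the value of $t$ that normalizes $(a,b)$ and determining when it is unique.

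Next I would carry out the case analysis. If $\det h<0$ then $a^2>b^2$, so $\tanh(2t)=b/a$ has a unique solution, and for it $b'=0$, $a'=\sgn(a)\sqrt{a^2-b^2}\ne0$, i.e.\ the diagonal form of part \eqref{p2:3}; symmetrically, $\det h>0$ gives $a^2<b^2$, the unique solution of $\tanh(2t)=a/b$, and the anti-diagonal form of part \eqref{p2:2}. If $\det h=0$ then either $h=0$, which already appears in the list of part \eqref{p2:1}, or $(a,b)$ lies on one of the four null half-lines $b=\pm a\ne0$; on each of these the action is multiplication by a positive factor which sweeps out all of $(0,\infty)$ as $t$ runs over $\R$, so $\SO$ acts simply transitively and the unique normalizing $t$ produces one of the four matrices in part \eqref{p2:1}. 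Uniqueness of the section when $h\ne0$ is then exactly the triviality of the stabilizer of each nonzero normal form: $T(af_1+bf_2)T^{-1}=af_1+bf_2$ forces $\sinh(2t)=0$ in the diagonal and anti-diagonal cases and $e^{\pm2t}=1$ on the null half-lines, hence $t=0$ always, whereas for $h=0$ the stabilizer is all of $\SO$ --- which is precisely why uniqueness is asserted only when $h\ne0$.

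The one point that needs real care, as opposed to routine hyperbolic-trigonometry bookkeeping, is that $\SO$ is connected and does not contain $-I$, so no element of it sends $(a,b)$ to $(-a,-b)$. Consequently the two branches of each hyperbola $a^2-b^2=c$ with $c\ne0$, and the four null half-lines, are genuinely distinct orbits; this is what makes $\chi$ and $-\chi$ give inequivalent normal forms in parts \eqref{p2:2} and \eqref{p2:3}, and what forces the four-element list in part \eqref{p2:1} rather than a single representative in each case.
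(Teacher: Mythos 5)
Your argument is correct and is essentially the paper's own proof: both compute the adjoint action of $T\in\SO$ on $h=af_1+bf_2$ explicitly, observe that $a^2-b^2=-\det h$ is preserved, and normalize by solving the resulting hyperbolic equation in each of the three sign cases, with uniqueness for $h\neq 0$ coming from the uniqueness of the normalizing $t$ (equivalently, triviality of the stabilizer). Your explicit remark that $\SO$ is connected and omits $-I$ --- so the two branches of each hyperbola and the four null rays are genuinely distinct orbits --- is left implicit in the paper (it is absorbed into the factor $\sgn(a+b)$ in the formula for $\chi$), but the substance is the same.
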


\begin{proof} If  $T=
\begin{psmallmatrix}
\cosh(t) & \sinh(t) \\
\sinh(t) & \cosh(t)
 \end{psmallmatrix} \in SO_{1,1}^+(\R)$ then the adjoint action on $ h$ 
 is given by
\begin{equation}
T^{-1} h\, T = \begin{pmatrix}
\cosh(2t) a+\sinh(2t) b & \sinh(2t) a + \cosh(2t) b \\ 
-\sinh(2t) a -\cosh(2t) b & -\cosh(2t) a-\sinh(2t) b \end{pmatrix}. 
\label{adjTh}
\end{equation}
If  $ \det  h = b^2-a^2> 0$ then the equation 
\begin{equation*}
\cosh(2t) a+\sinh(2t) b = 0
\end{equation*}
has the solution  $t=\frac14\ln(\frac{b-a}{b+a})$ and $h$ takes the 
form in item (\ref{p2:2}) with
\begin{equation}  \label{p2:eq1}
\chi = \sgn(a+b)\sqrt{b^2-a^2}.
\end{equation}

Similarly, if   $ \det  h < 0$  then the equation 
\begin{equation*}
\sinh(2t) a + \cosh(2t) b = 0
\end{equation*}
has the solution  $t=\frac14\ln(\frac{a-b}{b+a})$ and $h$ takes the 
form in item (\ref{p2:3}) with
\begin{equation}  \label{p2:eq2}
\chi = \sgn(a+b)\sqrt{a^2-b^2}.
\end{equation}
If $\det h=b^2-a^2=0
$ then
\begin{equation}  \label{e03}
h=
\begin{pmatrix}
a & \pm a \\ 
\mp a & -a
\end{pmatrix}
\end{equation}
and the adjoint action of $T$ is simply $T^{-1} h\, T = \exp(\pm 2t) 
h$,  which depending on $\sgn(a)$, has exactly one of the forms in item 
(\ref{p2:1}).
\end{proof}

Since $a=c$ and $b=(c_{23}^1-c_{13}^2)/2$ we have the following Corollary.

\begin{cor}
Assume that $\det h\neq 0$ for sub-Lorentzian contact structure. Then 
the following expression is a local invariant of $ts$-oriented 
structures: 
\begin{equation*}
\chi =\func{sgn}\left(c+\frac{c^1_{23}-c^2_{13}}2\right)\sqrt{\left| \left(\frac{c^1_{23}-c^2_{13}}2\right)^2-c^2 \right|}.
\end{equation*}
\end{cor}

\begin{cor}
\label{cor2} If a sub-Lorentzian contact structure on a contact $3$ 
manifold $M$ satisfies $h\neq 0$, then Proposition \ref{p2} defines a 
unique normal frame $\theta=s^*\omega_-$.
\end{cor}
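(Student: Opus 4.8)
The plan is to deduce the statement almost directly from the uniqueness clause of Proposition~\ref{p2} together with the definition of an adapted Cartan connection. First I would observe that the normal Cartan connection $\tilde\omega$ produced by Proposition~\ref{prop:NormalConn} (an instance of Theorem~\ref{prop:morimoto}) is unique, so that the only freedom in the object $\theta=s^*\omega_-$ is the choice of section $s\colon M\to\G$. Because $\G=\SO\times U$, the sections form a simply transitive torsor under the right $\SO$-action, and replacing $s$ by $\bar s=R_h s$, $h\colon M\to\SO$, replaces the pulled-back curvature data by its $\Ad_{h^{-1}}$-image; on the essential part $k_N=\kappa f_0+af_1+bf_2$ this is exactly the transformation \eqref{adjTh} of $h=af_1+bf_2$.

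Next I would check that $\theta=s^*\omega_-$ is a genuine coframe for \emph{any} section. By definition, $\tilde\omega$ being an adapted Cartan connection means precisely that the $\g_-$-valued part of $s^*\tilde\omega$ is an adapted coframe, hence in particular a pointwise isomorphism $TM\to\g_-$. Concretely, combining Lemma~\ref{lem:section} with the normalization computation that follows it, the section of that lemma already yields $\omega_-=e_1\otimes\theta_1+e_2\otimes\theta_2+e_3\otimes\theta_3$, i.e. the adapted coframe dual to the orthonormal frame in play; a general section merely rotates the $e_1,e_2$ components by an element of $\SO$, which keeps it a coframe.

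Then comes the core step: among all sections there is exactly one for which $k_N$, equivalently $h$, is in the canonical form of Proposition~\ref{p2}. That proposition has already established that when $h\neq0$ the normalizing parameter $t$ is uniquely determined at each point — in the cases $\det h>0$, $\det h<0$, and $\det h=0$ with $h\neq0$ alike — and in each case $t$ is given by an explicit formula in the entries of $h$ whose argument is nowhere zero precisely because $h\neq0$, so $t$ is a smooth function on $M$ and defines a genuine section $s$. Pulling $\omega_-$ back along this unique $s$ yields the asserted normal frame $\theta=s^*\omega_-$; its uniqueness is inherited from that of $\tilde\omega$ and of $s$, and it is independent of the auxiliary orthonormal frame used to set up the local computation, since two such frames differ by an $\SO$-valued function while the resulting normalizing section is pinned down invariantly by the canonical form of $h$. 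The only point needing care is thus the passage from the pointwise uniqueness of Proposition~\ref{p2} to a smooth global section, which is immediate from the explicit formulas for $t$; there is no substantial analytic or algebraic obstacle, the corollary being essentially a repackaging of Proposition~\ref{p2}.
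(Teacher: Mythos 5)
Your argument is correct and is essentially the paper's own proof: the paper likewise notes that a change of section $s\mapsto s\cdot h$ acts on $\theta=s^*\omega_-$ by $\Ad_h$ (via \eqref{intro:cartan_change}), so existence and uniqueness of the normal frame reduce to the unique normalizing section of Proposition \ref{p2}. Your additional remarks on smoothness of the normalizing parameter $t$ and on $\omega_-$ being a genuine coframe are harmless elaborations of the same route.
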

\begin{proof}
The change of section $s\to s\cdot h $, $h\colon M\to \SO$ for the 
Cartan connection is equivalent to the change $\theta \to 
\Ad_h(\theta)$ of associated frame 
$\theta=s^*\omega_-$ 
 due to the formula \eqref{intro:cartan_change}. Therefor existence and 
 uniqueness follows from Proposition \ref{p2}.
\end{proof}

\section{Classification of left-invariant sub-Lorentzian structures}
\subsection{Classification of real $3$-dimensional Lie algebras}

To begin we review the classification of real $3$-dimensional Lie algebras
following \v{S}nobl and Winternitz \cite{SnobWint}. If $\mathfrak{g}$ is a real $3$%
-dimensional Lie algebra then we define $\mathfrak{g}^{(1)}=[\mathfrak{g},%
\mathfrak{g}]$ and divide into the cases : $\mathrm{dim}\, \mathfrak{g}%
^{(1)} =0,1,2,3$. The cases $\mathrm{dim}\, \mathfrak{g}^{(1)} =0$ is 
the abelian algebra and the case $\mathrm{dim}\, \mathfrak{g}^{(1)} =1$
determines two classes, namely the Heisenberg algebra and the affine
algebra. The cases $\mathrm{dim}\, \mathfrak{g}^{(1)} =2$ give rise to
significantly more classes corresponding to eigenvalues of $\func{ad}_X 
|_{\mathfrak{g}^{(1)}}$, where $X\notin \mathfrak{g}^{(1)}$. Finally, 
in the case $\mathfrak{g}^{(1)} =3$  there are only $2$ 
non-isomorphic semi-simple Lie algebras, namely $\gsl_2(\R)$ and 
$\gsu_2(\R)$ 

\begin{thm}
\label{3class} Any $3$-dimensional Lie algebra is isomorphic to exactly one
of the following algebras:
\begin{align*}
\mathrm{dim}\, \mathfrak{g}^{(1)} &=0 \\
L(3,0):\, &\, [E_1,E_2]=0, \, \, [E_1,E_3]=0, \, \, [E_2,E_3]=0,\, \,  (\R^3) \\
\\
\mathrm{dim}\, \mathfrak{g}^{(1)} &=1 \\
L(3,1):\, &\, [E_1,E_2]=E_3, \, \, [E_1,E_3]=0, \, \, [E_2,E_3]=0, \, 
\, (\textrm{Heisenberg})\\
L(3,-1):\, &\, [E_1,E_2]=E_1, \, \, [E_1,E_3]=0, \, \, [E_2,E_3]=0,\, \, (A^+(\mathbb{R}) \oplus \mathbb{R}) \\
\\
\mathrm{dim}\, \mathfrak{g}^{(1)} &=2 \\
L(3,2,\eta):\, &\, [E_1,E_2]=0, \, \, [E_1,E_3]=E_1, \, \, [E_2,E_3]=\eta
E_2, \, \, 0<|\eta| \leq 1, \\
L(3,4,\eta):\, & \, [E_1,E_2]=0, \, \, [E_1,E_3]=\eta E_1- E_2, \, \,
[E_2,E_3]= E_1 + \eta E_2, \, \, \eta \in [0, \infty), \\
L(3,3):\, &\, [E_1,E_2]=0 , \, \, [E_1,E_3]=E_1, \, \, [E_2,E_3]= E_1 + E_2,\\
\\
\mathrm{dim}\, \mathfrak{g}^{(1)} &=3 \\
L(3,5):\, &\, [E_1,E_2]=E_1, \, \, [E_1,E_3]=-2E_2, \, \, 
[E_2,E_3]=E_3, \, \, \gsl_2(\R), \\
L(3,6):\, &\, [E_1,E_2]=E_3, \, \, [E_1,E_3]=-E_2, \, \, [E_2,E_3]= 
E_1, \, \, \gsu_2(\R).
\end{align*}
\end{thm}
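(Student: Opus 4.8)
The plan is to argue by the dimension of the derived algebra $\gone = [\g,\g]$, which is manifestly an isomorphism invariant taking values in $\{0,1,2,3\}$, and in each case to pin down the remaining freedom by extracting finer numerical invariants. If $\dim\gone=0$ the algebra is abelian and we land on $L(3,0)$. If $\dim\gone=1$, write $\gone=\langle E_1\rangle$; since $\gone$ is an ideal there is a linear functional $\phi$ on $\g$ with $[X,E_1]=\phi(X)E_1$. When $\phi=0$ the vector $E_1$ is central, the bracket descends to an alternating form on the two-dimensional quotient $\g/\gone$ which is nonzero (otherwise $\gone=0$) hence non-degenerate; choosing a symplectic basis and setting $E_3=E_1$ yields the Heisenberg algebra $L(3,1)$. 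When $\phi\ne 0$, the kernel $\ker\phi$ is two-dimensional and contains $E_1$; taking $E_3\in\ker\phi$ independent of $E_1$ (so $[E_1,E_3]=0$) and $E_2$ with $\phi(E_2)=-1$ (so $[E_1,E_2]=E_1$), the one remaining bracket $[E_2,E_3]$ automatically lies in $\gone=\langle E_1\rangle$ and is killed by replacing $E_3$ with $E_3$ plus a suitable multiple of $E_1$, giving $L(3,-1)\cong A^+(\R)\oplus\R$.

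If $\dim\gone=2$, a short Jacobi argument first shows $\gone$ must be abelian: were $\gone\cong A^+(\R)$, its derived line $[\gone,\gone]$ would be characteristic, hence preserved by $\ad_X$ for any $X\notin\gone$, and the Jacobi identity would force $\ad_X$ to map $\gone$ into that line, making $[\g,\g]$ at most one-dimensional, a contradiction. So $\gone\cong\R^2$ is an abelian ideal, $\g=\gone\rtimes\langle X\rangle$, and $A:=\ad_X|_{\gone}\in\gl_2(\R)$ is invertible because its image is $[\g,\g]=\gone$. The isomorphism type of $\g$ depends only on the class of $A$ modulo conjugation (change of basis in $\gone$) and nonzero rescaling (rescaling $X$), so the problem reduces to normalizing an invertible real $2\times 2$ matrix under $A\sim\lambda\,TAT^{-1}$. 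The real Jordan form produces exactly three possibilities: two real eigenvalues with $A$ diagonalizable, normalizable to $\operatorname{diag}(1,\eta)$ with $0<|\eta|\le 1$, which is $L(3,2,\eta)$; a single $2\times 2$ Jordan block, normalizable to $\begin{psmallmatrix}1&1\\0&1\end{psmallmatrix}$, which is $L(3,3)$; and a pair of complex conjugate eigenvalues, normalizable (after fixing conventions) to $\begin{psmallmatrix}\eta&1\\-1&\eta\end{psmallmatrix}$ with $\eta\ge 0$, which is $L(3,4,\eta)$.

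Finally, if $\dim\gone=3$ then $\g$ is perfect, hence semisimple: a proper nonzero radical would exhibit a semisimple algebra of dimension $1$ or $2$ as the quotient $\g/\mathrm{rad}\,\g$, which does not exist, while $\mathrm{rad}\,\g=\g$ would make $\g$ solvable, contradicting $\gone=\g$. Being three-dimensional and semisimple, $\g$ is simple, and the only real forms of $\gsl_2(\C)$ are $\gsl_2(\R)$ and $\gsu_2(\R)$, distinguished by the signature of the Killing form (Lorentzian versus negative definite), giving $L(3,5)$ and $L(3,6)$. It then remains to check that the listed algebras are pairwise non-isomorphic: they are separated first by $\dim\gone$, then within $\dim\gone=1$ by whether $\gone$ is central, within $\dim\gone=2$ by the invariant $(\operatorname{tr}A)^2/\det A$ together with the diagonalizability of $A$, and within $\dim\gone=3$ by the Killing form signature.

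I expect the only genuinely laborious parts to be the real-Jordan-form bookkeeping in the case $\dim\gone=2$ — verifying that the stated ranges $0<|\eta|\le 1$ and $\eta\ge 0$ are exact fundamental domains, so that coincidences such as $L(3,2,\eta)\cong L(3,2,1/\eta)$ are already accounted for and no others occur — together with the accompanying confirmation that the whole list is irredundant; the other cases reduce to short, essentially direct computations with structure constants.
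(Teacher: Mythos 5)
Your proof is correct and follows essentially the same strategy as the paper: casework on $\dim\mathfrak{g}^{(1)}$, reduction of the $\dim\mathfrak{g}^{(1)}=2$ case to the normal form of $\operatorname{ad}_X|_{\mathfrak{g}^{(1)}}$ under conjugation and nonzero rescaling, and identification of the two real forms of $\mathfrak{sl}_2(\mathbb{C})$ via the Killing form in the perfect case. The only divergences are cosmetic --- for instance, you rule out a non-abelian $\mathfrak{g}^{(1)}$ by observing that every derivation of $A^{+}(\mathbb{R})$ maps into its derived line, where the paper instead invokes solvability and the absence of semisimple elements in $[\mathfrak{g},\mathfrak{g}]$ --- and both arguments are valid.
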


We review the proof of the classification theorem as it provides the
procedures for putting a given algebra into its canonical form.

\begin{proof}
If $\mathrm{dim}\, \mathfrak{g}^{(1)} =0$ then $\mathfrak{g}$ is the three
dimensional abelian Lie algebra $L(3,0)$. If $\mathrm{dim}\, \mathfrak{g}%
^{(1)} =1$ then there exists $Z \in \mathfrak{g}$ such that $\mathfrak{g}%
^{(1)} = \mathrm{span}\, \{Z\}$ and so it follows that for any basis of the
form $\{X,Y,Z\}$ we have 
\begin{equation*}
[X,Y] = \alpha_1 Z, \quad [X,Z]= \alpha_2 Z, \quad [Y,Z]=\alpha_3 Z. 
\end{equation*}

Direct calculation shows that $[\mathfrak{g},[\mathfrak{g},\mathfrak{g}]]=\{0\}$ if and only if $\alpha_2=\alpha_3=0$. Moreover, if $\alpha_2=\alpha_3=0$ then $\mathfrak{g}$ is the Heisenberg algebra. Indeed,
if $E_1=X$, $E_2=Y$ and $E_3=\alpha_1Z$ then 
\begin{equation*}
[E_1,E_2]=E_3, \quad [E_1,E_3]=0, \quad [E_2,E_3]=0.
\end{equation*}

If $\alpha_3 \ne 0$ then 
\begin{equation*}
E_1=Z, \quad E_2=-\frac{1}{\alpha_3}Y, \quad E_3= \alpha_3 X - \alpha_2 Y +
\alpha_1 Z 
\end{equation*}
is a basis with bracket relations: 
\begin{align}
[E_1,E_2]=E_1, \quad [E_1,E_3]=0, \quad [E_2,E_3]=0.  \label{L3-1}
\end{align}
Similarly, if $\alpha_2 \ne 0$ then 
\begin{equation*}
E_1=Z, \quad E_2=-\frac{1}{\alpha_2}X, \quad E_3= -\alpha_3 X + \alpha_2 Y -
\alpha_1 Z 
\end{equation*}
is a basis with the same bracket relations as in \eqref{L3-1} above.

The Lie algebra defined by \eqref{L3-1} is denoted $L(3,-1)$ and has the
decomposition $L(3,-1)=L(2,1) \oplus L(1,0)$ where $L(2,1)$ is the
subalgebra spanned by $\{E_1,E_2\}$ and $L(1,0)=\mathrm{span}\, \{E_3\}$.

Next we assume $\mathrm{dim}\, \mathfrak{g}^{(1)} =2$. There are only 
two $2$-dimensional Lie algebras. Therefor $\g^{(1)}$ is abelian since 
$\g^{(1)}$ doesn't contain semi-simple elements ($\g$ is solvable).
%
Furthermore,  $\mathrm{rank}\, \func{ad}_X|_{\mathfrak{g}^{(1)}}=2$ for 
any
nonzero $X \not\in \mathfrak{g}^{(1)}$. We conclude that the map $\func{ad}%
_X|_{\mathfrak{g}^{(1)}}$ is an isomorphism $\mathfrak{g}^{(1)} \to 
\mathfrak{g}^{(1)}$ which doesn't depend on $X\notin\mathfrak{g}^{(1)}$.


There are three subcases to consider: $\func{ad}_X | _{\mathfrak{g}^{(1)}}$
diagonalises in $GL(\mathfrak{g}^{(1)})$, $\func{ad}_X | _{\mathfrak{g}^{(1)}}$ diagonalises in $GL( {\mathfrak{g}^{(1)}} \otimes_{\mathbb{R}} 
\mathbb{C})$, $\func{ad}_X | _{\mathfrak{g}^{(1)}}$ does not diagonalises. We
get the following classes: 
\begin{align*}
L(3,2,\eta):\, &\, [E_1,E_2]=0, \, \, [E_1,E_3]=E_1, \, \, [E_2,E_3]=\eta
E_2, \, \, 0<|\eta| \leq 1, \\
L(3,4,\eta):\, & \, [E_1,E_2]=0, \, \, [E_1,E_3]=\eta E_1- E_2, \, \,
[E_2,E_3]= E_1 + \eta E_2, \, \, \eta \in [0, \infty), \\
L(3,3):\, &\, [E_1,E_2]=0 , \, \, [E_1,E_3]=E_1, \, \, [E_2,E_3]= E_1 + E_2.
\end{align*}

Case: $\func{ad}_X$ diagonalises in $GL(\mathfrak{g}^{(1)})$. Suppose 
the eigen values of $%
\func{ad}_X | _{\mathfrak{g}^{(1)}} $ are $\lambda_1, \lambda_2 \in \mathbb{R%
} $, $|\lambda_1|\geq|\lambda_2|$ with $V_1,V_2 \in \mathfrak{g}^{(1)}$ 
denoting corresponding linearly
independent eigen vectors. Then the basis 
\begin{equation*}
E_1=V_1, \quad E_2=V_2, \quad E_3=-\frac{1}{\lambda_1} X
\end{equation*}
satisfies the $L(3,2,\eta)$ bracket relations with $\eta=\frac{\lambda_2}{%
\lambda_1}$, $0< |\eta| \leq 1$.

Case:  $\func{ad}_X$ diagonalises in $GL( {\mathfrak{g}^{(1)}} 
\otimes_{\mathbb{R}} \mathbb{%
C})$. In this case the eigenvalues are a conjugate pair $(\lambda, \bar
\lambda)$ and there exists a nonzero $W=U+iV \in {\mathfrak{g}^{(1)}}
\otimes_{\mathbb{R}} \mathbb{C}$ such that $[X,W]=\lambda W$. If $\Re
\lambda/ \Im \lambda \geq 0$ then the basis 
\begin{equation*}
E_1=U, \quad E_2= V, \quad E_3=-\frac{1}{\Im \lambda} X
\end{equation*}
satisfies the $L(3,4,\eta)$ bracket relations with $\eta =\Re \lambda/ \Im
\lambda$. If $\Re \lambda/ \Im \lambda <0$ then  
\begin{equation*}
E_1=U, \quad E_2= -V, \quad E_3=\frac{1}{\Im \lambda} X
\end{equation*}
satisfies the $L(3,4,\eta)$ bracket relations with $\eta =-\Re \lambda/ \Im
\lambda$. It is known that $L(3,4,s) \simeq L(3,4,t)$ if and only if $t= \pm
s$.

Case: $\func{ad}_X$ does not diagonalises. In this we consider the 
Jordan form of $\func{ad}%
_X | _{\mathfrak{g}^{(1)}}$, in particular we can choose the basis $\{Y,Z\}$
so that $\mathrm{ad}_X$ is given by the matrix 
\begin{equation*}
\begin{pmatrix}
\lambda & 1 \\ 
0 & \lambda
\end{pmatrix}
, \quad \lambda= \frac{1}{2} \mathrm{tr}\, \mathrm{ad}_X. 
\end{equation*}
Then the basis 
\begin{equation*}
E_1=\frac{1}{\lambda}Y, \quad E_2=Z, \quad E_3=-\frac{1}{\lambda} X 
\end{equation*}
satisfies the $L(3,3)$ bracket relations.

In the case $\mathrm{dim}\, \mathfrak{g}^{(1)}=3$ we use the fact that 
there are just 2 semi-simple real Lie algebras of dimension 3. 
One can distinguish $L(3,5)$ and $L(3,6)$ via the Killing form. Indeed 
$L(3,5)=\gsl_2(\R)$ is a split real form of $\gsl_2(\C)$ with 
sign-indefinite Killing form and $L(3,6)=\gsu_2(\R)$ is a compact real 
form with negative negative definite Killing form.

\end{proof}
\subsection{Left-invariant sub-Lorentzian contact structures in 
dimension 3}
Now we are going to proof Theorem \ref{mainThm}. If the sub-Lorentzian 
structure is a left-invariant structure on a Lie 
group
then all the invariants are constant. In particular for the 
3-dimensional sub-Lorentzian case 
\begin{equation*}
\kappa = -(c^1_{12})^2+(c^2_{12})^2+ \frac{c^1_{23}+c^2_{13}}2
\end{equation*}
and  
\begin{equation*}
h= 
\begin{pmatrix}
c & \frac{c^1_{23}-c^2_{13}}2 \\ 
\frac{c^2_{13}-c^1_{23}}2 & -c
\end{pmatrix}.
\end{equation*}
In order to obtain classification we consider canonical frames given by 
Proposition \ref{p2} and Corollary \ref{cor2}.

\subsubsection{\textbf{Case} $h =0$} This case needs special 
consideration 
since 
Proposition \ref{p2} and Corollary \ref{cor2} doesn't provide canonical 
frame for this particular case.

We have $c=0$ and $ 
c_{13}^2 = c_{23}^1 = \gamma $, hence the Lie brackets are
\begin{align*}
[X_1,X_3] &= \gamma X_2 \\
[X_2,X_3] &= \gamma X_1  \\
[X_1,X_2] &= c_{12}^1 X_1 + c_{12}^2 X_2 + X_3
\end{align*}
and 
\begin{align*}
[X_1,[X_2,X_3]] +[[X_2,[X_3,X_1]]  + & [ X_3  ,[X_1,X_2]] =[X_3,[X_1,X_2]]\\
&= -\gamma c_{12}^1 X_2 - \gamma c_{12}^2 X_1 .
\end{align*}
The Jacobi identity forces $\gamma=0$ or $c_{12}^1=c_{12}^2=0$, which 
lead to the following two possible Lie algebra structures:
\begin{align*}
A: \quad [X_1,X_3] &= 0 & B: \quad [X_1,X_3] &= \gamma X_2  \\
 [X_2,X_3] &= 0 \quad   (\kappa=(c_{12}^2)^2-(c_{12}^1)^2) & [X_2,X_3] 
 &= \gamma X_1 \quad (\kappa=\gamma) \\
[X_1,X_2] &= c_{12}^1 X_1 + c_{12}^2 X_2 + X_3 & [X_1,X_2] &= X_3. 
\end{align*} 

If $c_{12}^1 =c_{12}^2 = \gamma = 0$, then both algebras $A$ and $B$ 
are isomorphic to the Heisenberg algebra $L(3,1)$. If  $\gamma \ne 0$, 
then $\gamma=\kappa$ and $B$ is isomorphic to $\gsl_2(\R)=L(3,5)$ via 
the isomorphism given by the following change of basis:
\begin{align*}
E_1 &= X_1 + X_2, \quad E_2 = \frac{1}{\kappa} X_3 \quad E_3 = 
\frac{1}{\kappa} (X_1-X_2).
\end{align*}	
Furthermore, the sub-Lorentzian metric is $-(1/ 2\kappa)K$ where $K$ is the Killing form. 
	
If $c_{12}^1\ne 0$ or $c_{12}^2 \ne 0$, then the algebra $A$ is isomorphic to $L(3,-1)$. Indeed, according to  $c_{12}^1\ne 0$ or $c_{12}^2 \ne0$, the isomorphism is given by the corresponding change of basis:
\begin{align}
1). \quad E_1 &= X_1 +\frac{c_{12}^2}{c_{12}^1} X_2  
+\frac{1}{c_{12}^1}X_3 & 2). \quad E_1 &= \frac{c_{12}^1}{c_{12}^2} X_1 
+  X_2 + \frac{1}{c_{12}^2}X_3 \nonumber\\
E_2 &= \frac{1}{c_{12}^1} X_2 & E_2 &= -\frac{1}{c_{12}^2} X_1 \label{possibles}\\
E_3 &= X_3 & E_3 &= -X_3. \nonumber
\end{align}

\begin{thm}\label{isothm1}
If $\kappa$ is nonzero and identical for $A$ and $B$, 
then the sub-Lorentzian structures are locally $ts$-isometric.
\end{thm}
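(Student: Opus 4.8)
The plan is to exhibit an explicit local $ts$-isometry between the structures attached to $A$ and $B$ by realizing both of them as left-invariant structures on $\gsl_2(\R)=L(3,5)$ with the \emph{same} orthonormal coframe data. We already know from the discussion preceding Theorem~\ref{isothm1} that, when $\gamma=\kappa\neq 0$, the algebra $B$ is isomorphic to $\gsl_2(\R)$ and the sub-Lorentzian metric on it equals $-\tfrac{1}{2\kappa}K$, where $K$ is the Killing form, with $\mathcal H=\langle X_1,X_2\rangle$. So the first step is to show that the algebra $A$ (with the relevant nonzero $\kappa=(c_{12}^2)^2-(c_{12}^1)^2$) is \emph{also} isomorphic to $\gsl_2(\R)$, with its contact distribution and sub-Lorentzian metric likewise given by the restriction of $-\tfrac{1}{2\kappa}K$ to a spacelike/timelike pair.

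First I would normalize $A$. Since $\kappa\neq0$ we have $(c_{12}^2)^2\neq(c_{12}^1)^2$; applying a boost $T\in\SO$ to the frame $\{X_1,X_2\}$ (which preserves the sub-Lorentzian metric, the Reeb direction, and the bracket relation $[X_1,X_2]=\cdots+X_3$) we may rotate $(c_{12}^1,c_{12}^2)$ hyperbolically. The invariant combination $(c_{12}^2)^2-(c_{12}^1)^2=\kappa$ is unchanged, so we can bring $A$ to the normal form in which, say, $c_{12}^1=0$ and $c_{12}^2=\sqrt{\kappa}$ if $\kappa>0$, or $c_{12}^2=0$ and $c_{12}^1=\sqrt{-\kappa}$ if $\kappa<0$. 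Then I would compute the derived algebra and the Killing form of the resulting $3$-dimensional Lie algebra directly from the structure constants $[X_1,X_3]=[X_2,X_3]=0$, $[X_1,X_2]=c_{12}^1X_1+c_{12}^2X_2+X_3$ and check it is semisimple with indefinite Killing form, hence $\cong\gsl_2(\R)$; simultaneously I would verify that $\mathcal H=\langle X_1,X_2\rangle$ is a contact distribution and that $g$ restricted to it is a positive multiple of $-K$, identically to the situation for $B$. The explicit change of basis can be read off much as in \eqref{possibles}, or better, I would produce a basis $\{E_1,E_2,E_3\}$ of $A$ satisfying the $L(3,5)$ relations together with $g=-\tfrac{1}{2\kappa}K$ and $\mathcal H$ equal to a fixed $K$-orthogonal timelike/spacelike plane, matching the corresponding normalized data for $B$.

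Once both $A$ and $B$ are presented as the \emph{same} data $(\gsl_2(\R),\mathcal H_0,-\tfrac{1}{2\kappa}K)$ with the \emph{same} time and space orientation (here one must check the orientations agree, or compose with an element of $\SO$ to make them agree, which is always possible since $\SO$ acts transitively on future-directed timelike unit vectors), the required local $ts$-isometry is simply the identity on $SL_2(\R)$, or—if one prefers to argue at the level of Lie groups—the Lie group isomorphism induced by the Lie algebra isomorphism constructed above, which by left-invariance and the matching of $(\mathcal H,g)$ and orientations is a $ts$-isometry. This is legitimate because a left-invariant sub-Lorentzian structure on a Lie group is determined by the triple (subspace of $\mathfrak g$, metric on it, orientations), and we have arranged all three to coincide.

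The main obstacle is the normalization and orientation bookkeeping in the case $\kappa<0$ versus $\kappa>0$: in one case the distinguished bracket direction in $\mathcal H$ after the boost is the timelike line $\langle X_1\rangle$ and in the other the spacelike line $\langle X_2\rangle$, and one must be careful that the resulting identification with $L(3,5)$ together with the Killing-form metric produces the correct signature and a \emph{future}-pointing time orientation; this is where a wrong choice of sign would give only an $s$-isometry or $t$-isometry rather than a full $ts$-isometry, and it is precisely compensated by the freedom to post-compose with a boost or with $-\mathrm{id}$ on $\mathcal H$ as allowed by $\SO$. Everything else—the Killing form computation, the contact condition, reading off $\kappa$—is routine and follows from formulas already recorded in Section~\ref{sec:Inv} and the preceding paragraphs.
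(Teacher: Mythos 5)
Your strategy breaks down at its very first step: the algebra $A$, with brackets $[X_1,X_3]=[X_2,X_3]=0$ and $[X_1,X_2]=c_{12}^1X_1+c_{12}^2X_2+X_3$, has one-dimensional derived algebra $\mathfrak{g}^{(1)}=\mathrm{span}\{c_{12}^1X_1+c_{12}^2X_2+X_3\}$, so it is solvable --- the paper identifies it as $L(3,-1)=A^{+}(\R)\oplus\R$ via the change of basis \eqref{possibles} --- and its Killing form is degenerate. No hyperbolic rotation of $(c_{12}^1,c_{12}^2)$ can change this, so $A$ is never isomorphic to $\gsl_2(\R)$, whereas $B$ with $\gamma=\kappa\neq 0$ is $\gsl_2(\R)$. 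Hence there is no Lie algebra isomorphism $A\to B$ at all, and the desired $ts$-isometry cannot be induced by a Lie group isomorphism; your plan to ``produce a basis of $A$ satisfying the $L(3,5)$ relations'' is impossible. This non-isomorphism of the underlying groups is precisely the interesting content of the theorem (cf.\ the remark in the introduction that $A^{+}(\R)\oplus\R$ is locally $ts$-isometric to $SL_2(\R)$, mirroring the sub-Riemannian phenomenon of Agrachev--Barilari), so any correct proof must produce a local diffeomorphism that is not a homomorphism.

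The paper's argument avoids group isomorphisms entirely. It checks from the curvature formulas that for both $A$ and $B$ the curvature function of the normal Cartan connection is the constant $k=\kappa\,(e_1\otimes e_2^*\wedge e_3^*+e_2\otimes e_1^*\wedge e_3^*)$, on which $\SO$ acts trivially; thus both are constant curvature structures and the canonical coframes on the two principal bundles $\G$ have identical constant structure functions. Olver's rigidity theorem for such coframes then gives a local diffeomorphism of the principal bundles carrying one Cartan connection to the other; since it respects the $\SO$-action it descends to a local diffeomorphism of the base manifolds taking adapted frames to adapted frames, i.e.\ a local $ts$-isometry. If you want to repair your approach you would need some such mechanism for comparing the two structures without comparing the groups; the Cartan-connection rigidity argument is the systematic way to do it.
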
	
\begin{proof}

 To show this we exploit the fact that corresponding structures are constant curvature structures.

\begin{defi}
	We say that sub-Lorentzian structure is a constant curvature 
	structure if the curvature of the corresponding Cartan connection is 
	constant on the whole principle bundle. 
\end{defi}

One can see that sub-Lorentzian structure is a constant curvature 
structure only if $\SO$ acts trivially (identically) on the curvature 
function. 
Otherwise curvature function wouldn't be constant along fibers of the 
principle bundle.

Any two constant curvature structures with the same curvature are 
isomorphic due to the following theorem.
 
\begin{thm}\label{olver1418} {\cite [Thm 14.18, p. 433] {Olver1}} 
Let $\theta$ and $\bar \theta$ be two coframes on $m$-dimensional 
manifolds $M$ and $\bar M$, having the same constant structure 
functions.  Then for any pair of points $p \in M$ and $q \in \bar M$, 
there exists a unique local diffeomorphism $ \Phi: M \to \bar M$ such 
that $q=\Phi(p)$ and $\phi^* \bar \theta_i =\theta_i$ for $i = l,...,m$.
\end{thm}

If follows that there exists a local diffeomorphism of the corresponding principle bundles which preserves fibers and maps one Cartan connection to the other. Since the action of $G_0$ is preserved, the projection of this diffeomorphism gives rise to the required isometry between the underlying manifolds. 

Indeed, let $\pi_1\colon \mathcal{G}_1\to M_1$ and $\pi_2\colon \mathcal{G}_2\to M_2$ be principle bundles corresponding to sub-Lorentzian manifolds $M_1$ and $M_2$. Let $\tilde \om_1\colon T \mathcal{G}_1\to \g$ and $\tilde \om_2 \colon T \mathcal{G}_1\to \g$ be Cartan connections induced by the sub-Lorentzian structure, and let $\varphi \colon \mathcal{G}_1 \to  \mathcal{G}_2$ be the local diffeomorphism such that $\tilde \om_1=\varphi^* \tilde \om_2$. Then for any  section $s_1\colon M_1 \to  \mathcal{G}_1$ with image contained in the domain of $\varphi$, the diffeomorphism $\pi_2 \circ \varphi \circ s_1$ maps the frame adapted to sub-Lorentzian structure on $M_1$ to the frame adapted to sub-Lorentzian structure on $M_2$. Therefore this diffeomorphism automatically preserves the sub-Lorentzian structure.	

The last step in the proof is to check that A and B are constant 
curvature structures with the same curvature function. Indeed, formulas 
on page \pageref{curvf} shows that curvature function for both cases is
\[ k=\kappa(e_1\ot e^*_2 \we e^*_3 + e_2\ot e^*_1 \we e^*_3 ) \]
and $\SO$ acts trivially on it.

\end{proof}

\subsubsection{\textbf{Case} $\det h =0,$ $ h \neq 0$} In this case we 
have 
$c \in 
\{-1,1\}$ and $ c_{23}^1 - c_{13}^2 = \pm 2$, and the brackets are 
\begin{align*}
[X_1,X_3] &= c X_1 + c_{13}^2 X_2 \\
[X_2,X_3] &= (c_{13}^2  \pm 2)X_1 - c X_2 \\
[X_1,X_2] &= c_{12}^1 X_1 + c_{12}^2 X_2 + X_3.
\end{align*} 
The cases $c=1$ and $c=-1$ can be obtained one from the other by a reversal of the time orientation or a reversal of the space orientation (but not both simultaneously). The underlying transformation is isometric but not $ts$-isometric. However the underlying group is unaffected and thus we only consider the case $c=1$. 


The Jacobi identity 
\begin{align*}
[X_1,[X_2,X_3]]+[[X_2,[X_3,X_1]]+& [X_3,[X_1,X_2]] =[X_3,[X_1,X_2]]\\
&= -( c_{12}^1 + c_{12}^2  (c_{13}^2  \pm 2) ) X_1   + (c_{12}^2   
-c_{12}^1 c_{13}^2) X_2 . 
\end{align*}
implies that we must also have the following 
equations:
\[ c_{12}^1(1+ c_{13}^2 (c_{13}^2  \pm 2)) =0,  \quad \textrm{and } 
\quad   c_{12}^2=c_{12}^1 c_{13}^2. \]
There are three possible solutions:
\begin{enumerate}
\item $\pm2=-2$, $c_{13}^2 =1$, $c_{12}^2=c_{12}^1$, $\kappa=0$,
\item $\pm2= 2$, $c_{13}^2 =-1$, $ c_{12}^2=-c_{12}^1$, $\kappa=0$,
\item $c_{12}^1=0$,  $c_{12}^2=0$, $\kappa=(c_{23}^1+c_{13}^2)/2$. 
\end{enumerate}
We see that solutions (1) and (2) give rise to two families of 
sub-Lorentzian structures which couldn't be distinguished by invariants 
$\kappa$ and $h$. Therefor we introduce $\tau=c^1_{12}$ which is an 
additional invariant for these particular cases. One could check that 
for solutions (1) and (2) $\tau$ is a covariant derivative of $h$ along 
$X_1$.

In solution (1) the brackets are
\begin{align}
[X_1,X_3] &= X_1 +  X_2  \nonumber\\
[X_2,X_3] &= - (X_1 +  X_2) \label{alg1}\\
[X_1,X_2] &= \tau (X_1 + X_2) + X_3. \nonumber 
\end{align}
which implies that $\mathfrak{g}^{(1)} = \textrm{span} \{ X_1 + X_2 , 
X_3\}$ . Furthermore we have that 
\[\func{ad}_{X_1} | 
_{\mathfrak{g}^{(1)}} = \begin{pmatrix}
 \tau & 1 \\ 
1 & 0
\end{pmatrix}\]
relative to the basis $\{ X_1 + X_2 , X_3\}$. The 
characteristic polynomial  of $\func{ad}_{X_1} | 
_{\mathfrak{g}^{(1)}}$ 
is $t^2- \tau t-1$ and the eigenvalues are $( \tau \pm 
\sqrt{\tau^2+4})/2$. Following the classification procedure the algebra 
is 
$L(3,2,\frac{\lambda_1}{\lambda_2})=L(3,2,\frac{\lambda_2}{\lambda_1})$ 
where $\lambda_1$ and $\lambda_2$ are the eigenvalues.

In solution (2) the brackets are
\begin{align}
[X_1,X_3] &= X_1 - X_2\nonumber \\
[X_2,X_3] &= X_1 -  X_2  \label{alg2}\\
[X_1,X_2] &=  \tau (X_1 - X_2) + X_3.  \nonumber
\end{align} which implies that $\mathfrak{g}^{(1)} = \textrm{span} \{ 
X_1 
- X_2 , X_3\}$ . Furthermore we have that
\begin{align}
\func{ad}_{X_1} | _{\mathfrak{g}^{(1)}} = \begin{pmatrix}
- \tau & 1 \\ 
-1 & 0
\end{pmatrix} \label{adx12}
\end{align} relative to the basis $\{ X_1 - X_2 , X_3\}$. The 
characteristic polynomial  of $\func{ad}_{X_1} | 
_{\mathfrak{g}^{(1)}}$ 
is $t^2+ \tau t + 1$ and the eigenvalues are $(- \tau \pm 
\sqrt{ \tau^2-4})/2$. Following the classification procedure we 
get the following three possibilities:
\begin{enumerate}[(a)]
\item If $|\tau|= 2$ then the algebra is $L(3,3)$ since \eqref{adx12} 
does not diagonalises,
\item If $|\tau|> 2$ then the algebra is 
$L(3,2,\frac{\lambda_1}{\lambda_2})=L(3,2,\frac{\lambda_2}{\lambda_1})$ 
where $\lambda_1$ and $\lambda_2$ are the eigenvalues,
\item If $|\tau| < 2$ then the algebra is $L(3,4, 
|\tau|/\sqrt{4-\tau^2} )$.
\end{enumerate}

We remark that in solutions (1) and (2)(b) we do have distinct groups. 
Indeed suppose that 
\[\frac{ c_{12}^1 - \sqrt{(c_{12}^1)^2+4} } { 
c_{12}^1 + \sqrt{(c_{12}^1)^2+4} }= \left ( \frac{ -C_{12}^1 - 
\sqrt{(C_{12}^1)^2-4} } {-C_{12}^1 + \sqrt{(C_{12}^1)^2-4} } \right 
)^{\pm 1} \]
where  $c^k_{ij}$ denote the structure constant in solution (1) and 
$C^k_{ij}$ denote the structure constant in solution (2)(b). Then it 
follows that $c^1_{12}=C^1_{12} = 0$ which contradicts $|C^1_{12}|>2$.

In solution (3) the brackets are
\begin{align*}
[X_1,X_3] &= X_1 + (\kappa \mp 1) X_2 \\
[X_2,X_3] &= (\kappa \pm 1) X_1 - X_2 \\
[X_1,X_2] &= X_3. 
\end{align*} 
It follows that $\dim \mathfrak{g}^{(1)}<3$ if and only if  
$\kappa=0$ which reduces to particular cases of solutions (1) or (2).
If $\dim \mathfrak{g}^{(1)}=3$ the Killing form is 
\[  K= (2\kappa\mp 2) (x_1)^2 - 4 x_1 x_2 - (2\kappa\pm 2) (x_1)^2
+	2\kappa^2(x_3)^2.\] 
and so $\kappa\neq 0$ implies $\mathfrak{g} 
\simeq L(3,5)$.

\subsubsection{\textbf{Case} $\det h >0$} In this case we have $c=0$ 
and $ 
c_{13}^2 - c_{23}^1 = 2 \chi $, hence the brackets are
\begin{align*}
[X_1,X_3] &= c_{13}^2 X_2 \\
[X_2,X_3] &= (c_{13}^2-2\chi) X_1 \\
[X_1,X_2] &= c_{12}^1 X_1 + c_{12}^2 X_2 + X_3
\end{align*}
and
\begin{align*}
[X_1,[X_2,X_3]]+[[X_2,[X_3,X_1]]+& [X_3,[X_1,X_2]] =[X_3,[X_1,X_2]]\\
&= -c_{12}^1  c_{13}^2 X_2  -  c_{12}^2 (c_{13}^2-2 \chi) X_1. 
\end{align*}

The Jacobi identity implies that we also have the following equations:
\[c_{12}^1  c_{13}^2=0 \quad c_{12}^2 (c_{13}^2-2\chi)=0. \]
There are three possible solutions:
\begin{enumerate}
\item $c_{12}^1=0$,  $c_{12}^2=0$, $(\kappa=c_{13}^2-\chi)$
\item $c_{12}^1=0$,  $c_{13}^2-2 \chi=0$,  $(\kappa=(c_{12}^2)^2+\chi)$
\item $c_{13}^2=0$,  $c_{12}^2=0$,  $(\kappa=-(c_{12}^1)^2-\chi)$. 
\end{enumerate}

In solution (1) the brackets are 
\begin{align*}
[X_1,X_3] &= (\kappa + \chi) X_2 \\
[X_2,X_3] &= (\kappa - \chi) X_1 \\
[X_1,X_2] &= X_3. 
\end{align*}
We note that $\dim \mathfrak{g}^{(1)}=3$ if and only if $ 
\kappa^2-\chi^2 \ne 0$. The Killing form is
	\[  K= 2(\kappa+\chi)(x_1)^2 - 2(\kappa-\chi)(x_2)^2 + 
	2(\kappa^2-\chi^2)(x_3)^2.\] 
Hence we conclude that if $ \kappa+\chi <0$ and $\kappa-\chi>0$ then $K$ is negative definite and the algebra is $L(3,6)$ otherwise if  $ \kappa^2-\chi^2 
\neq 0$ the algebra is $ L(3,5)$. 

If $\chi=\kappa$ then $\mathfrak{g}^{(1)} = \textrm{span} \{ X_2 , 
X_3\}$. 
Furthermore we have that
\begin{align*}
\func{ad}_{X_1} | _{\mathfrak{g}^{(1)}} = \begin{pmatrix}
0 & 2 \kappa \\ 
1 & 0
\end{pmatrix} 
\end{align*} relative to the basis $\{ X_2 , X_3\}$. The characteristic 
polynomial  of $\func{ad}_{X_1} |_{\mathfrak{g}^{(1)}}$ is $t^2-2 
\kappa$. If $\kappa>0$  then the eigenvalues are $\pm \sqrt{2\kappa}$ 
and the classification procedure implies that the algebra is 
$L(3,2,-1)$. If $\kappa<0$  then the eigenvalues are $\pm i 
\sqrt{-2\kappa}$ and the classification procedure implies that the 
algebra is $L(3,4,0)$.

If $\chi=-\kappa$ then $\mathfrak{g}^{(1)} = \textrm{span} \{ X_1 , 
X_3\}$. Furthermore we have that
\begin{align*}
\func{ad}_{X_2} | _{\mathfrak{g}^{(1)}} = \begin{pmatrix}
0 & 2 \kappa \\ 
-1 & 0
\end{pmatrix}
\end{align*} relative to the basis $\{ X_1 , X_3\}$. The characteristic 
polynomial  of $\func{ad}_{X_1} |_{\mathfrak{g}^{(1)}}$ is $t^2+2 
\kappa$. If $\kappa>0$  then the eigenvalues are $\pm i \sqrt{2\kappa}$ 
and the classification procedure implies that the algebra is 
$L(3,4,0)$. If $\kappa<0$  then the eigenvalues are $\pm 
\sqrt{-2\kappa}$ and the classification procedure implies that the 
algebra is $L(3,2,-1)$.

In solution (2) the brackets are 
\begin{align*}
[X_1,X_3] &= 2\chi X_2 \\
[X_2,X_3] &= 0 \\
[X_1,X_2] &= c_{12}^2 X_2 + X_3, \quad ( (c_{12}^2)^2 =\kappa-\chi). 
\end{align*}
and we see that $\mathfrak{g}^{(1)} = \textrm{span} \{ X_2 , X_3\}$ . 
Furthermore,  we also have that 
\begin{align}
\func{ad}_{X_1} | _{\mathfrak{g}^{(1)}} = \begin{pmatrix}
c_{12}^2 & 2 \chi \\ 
1 & 0
\end{pmatrix} \label{adx1dethgr0}
\end{align}
relative to the basis $\{ X_2 , X_3\}$. The characteristic polynomial  
of $\func{ad}_{X_1} | _{\mathfrak{g}^{(1)}}$ is $t^2-c_{12}^2t-2 \chi$ 
and the eigenvalues are $(c_{12}^2 \pm \sqrt{(c_{12}^2)^2+8\chi})/2$. 
Following the classification procedure we get the following three 
possibilities:   
\begin{enumerate}[(a)]
\item If $(c_{12}^2)^2=-8\chi$ then the algebra is $L(3,3)$ since 
\eqref{adx1dethgr0} does not diagonalises.
\item If $(c_{12}^2)^2>-8\chi$ then the algebra is 
$L(3,2,\frac{\lambda_1}{\lambda_2})=L(3,2,\frac{\lambda_2}{\lambda_1})$ 
where $\lambda_i$ are the eigenvalues.
\item If $(c_{12}^2)^2<-8\chi$ then the algebra is $L(3,4, c_{12}^2/ 
\sqrt{-8\chi-(c_{12}^2)^2} )$.
\end{enumerate}

In solution (3) the brackets are 
\begin{align*}
[X_1,X_3] &= 0 \\
[X_2,X_3] &= -2\chi X_1 \\
[X_1,X_2] &= c_{12}^1 X_1 + X_3, \quad ( (c_{12}^1)^2 =-\kappa-\chi). 
\end{align*}
and we see that $\mathfrak{g}^{(1)} = \textrm{span} \{ X_1 , X_3\}$ . 
Furthermore,  we also have that
\begin{align}
\func{ad}_{X_2} | _{\mathfrak{g}^{(1)}} = \begin{pmatrix}
-c_{12}^1 & -2 \chi \\ 
-1 & 0
\end{pmatrix} \label{adx1dethgr02}
\end{align}
relative to the basis $\{ X_1 , X_3\}$. The characteristic polynomial  
of $\func{ad}_{X_1} | _{\mathfrak{g}^{(1)}}$ is $t^2+c_{12}^1t-2 \chi$ 
and the eigenvalues are $(-c_{12}^1 \pm \sqrt{(c_{12}^1)^2+8\chi})/2$. 
Following the classification procedure we get the following three 
possibilities:   
\begin{enumerate}[(a)]
\item If $(c_{12}^1)^2=-8\chi$ then the algebra is $L(3,3)$ since 
\eqref{adx1dethgr02} does not diagonalises.
\item If $(c_{12}^1)^2>-8\chi$ then the algebra is 
$L(3,2,\frac{\lambda_1}{\lambda_2})=L(3,2,\frac{\lambda_2}{\lambda_1})$ 
where $\lambda_i$ are the eigenvalues.
\item If $(c_{12}^1)^2<-8\chi$ then the algebra is $L(3,4, c_{12}^1/ 
\sqrt{-(c_{12}^1)^2-8\chi }  )$.
\end{enumerate}

We remark that the solutions (2)(b) and (3)(b) are  distinct except when $ c^1_{12}=\pm c^2_{12}$ and solutions (2)(c) and (3)(c) are  distinct except when $ c^1_{12}=\pm c^2_{12}$. In fact case (3) can be obtained from cases (2) by multiplying the metric by $-1$  (i.e., timelike becomes spacelike and vice versa).

\subsubsection{\textbf{Case} $\det h < 0$} In this case we have 
$c=\chi$ 
and $ 
c_{23}^1 =c_{13}^2$, hence the brackets are
\begin{align*}
[X_1,X_3] &= \chi X_1 + c_{13}^2 X_2 \\
[X_2,X_3] &= c_{13}^2 X_1 -  \chi X_2 \\
[X_1,X_2] &= c_{12}^1 X_1 + c_{12}^2 X_2 + X_3
\end{align*}
and
\begin{align*}
[X_1,[X_2,X_3]]+[[X_2,[X_3,X_1]]+& [X_3,[X_1,X_2]] =[X_3,[X_1,X_2]]\\
&= - (c^1_{12} \chi   +  c^2_{12} c^2_{13}) X_1 +(  c^2_{12} \chi-c^1_{12} c^2_{13}  ) X_2. 
\end{align*}

The Jacobi identity implies that we also have the following equations:
\[ c_{12}^1 \chi   +  c_{12}^2 c_{13}^2 =0 \quad \textrm{and} \quad 
c_{12}^2 \chi-c_{12}^1 c_{13}^2 =0.\]
It follows that $c^1_{12} = 0$,  $c^2_{12} = 0$ and $\kappa=c_{13}^2$,  
moreover the brackets are
\begin{align*}
[X_1,X_3] &= \chi X_1 + \kappa X_2, \\
[X_2,X_3] &= \kappa X_1 -  \chi X_2, \\
[X_1,X_2] &= X_3,
\end{align*}
implying $\dim \mathfrak{g}^{(1)}=3$. The Killing form is 
	\[  K= 2\kappa\left((x_1)^2 - (x_2)^2\right) - 4\chi x_1 x_2 
      +	2(\kappa^2+\chi^2)(x_3)^2.\] 
 It is sign-indefinite hence the algebra is $L(3,5)$. 

The classification is complete.

\end{document}